\def \RR {\mathbb R}
\def \NN {\mathbb N}
\def \EE {\mathbb E}
\def \CC {\mathbb C}
\def \eps {\varepsilon}
\def \cF {\mathcal F}
\newcommand{\Var}{\mathrm{Var}}
\DeclarePairedDelimiter\abs{\lvert}{\rvert}%
\DeclarePairedDelimiter\norm{\lVert}{\rVert}%
\newtheorem{theorem}{Theorem}
\newtheorem{lemma}[theorem]{Lemma}
\newtheorem{proposition}[theorem]{Proposition}
\newtheorem{corollary}[theorem]{Corollary}
\theoremstyle{definition}
\newtheorem{remark}[theorem]{Remark}
\def\qed{\hfill $\vcenter{\hrule height .3mm
		\hbox {\vrule width .3mm height 2.1mm \kern 2mm \vrule width .3mm
			height 2.1mm} \hrule height .3mm}$ \bigskip}
\def \Re {{\rm  Re  }}
\def \Im {\rm  Im  }
\title{Polynomial Approximation in \( L^2 \) of the Double Exponential via Complex Analysis}
\author{Pierre Bizeul\textsuperscript{1}
and Boaz Klartag\textsuperscript{2}}
\date{}
\begin{document}

\maketitle

\footnotetext[1]{Department of Mathematics, Technion – Israel Institute of Technology, Haïfa 3200003, Israel. Email: \href{mailto:pbizeul@imj-prg.fr}{pbizeul@imj-prg.fr}. Supported by the European Research Council (ERC) under the European Union’s Horizon 2020 research and innovation programme, grant agreement No. 101001677 “ISOPERIMETRY”.}

\footnotetext[2]{Department of Mathematics, Weizmann Institute of Science, Rehovot 7610001, Israel. Email: \href{mailto:boaz.klartag@weizmann.ac.il}{boaz.klartag@weizmann.ac.il}. Supported by a grant from the Israel Science Foundation (ISF).}

\begin{abstract}
    We study the polynomial approximation problem in $L^2(\mu_1)$ where $\mu_1(dx) = \frac{e^{-\abs{x}}}{2} \ dx$. We show that for any absolutely continuous function $f$,
    $$\sum_{k=1}^{\infty} \log^2(e+k)\langle f,P_k\rangle^2 \ \leq C\left(\int_{\RR} \log^2(e+\abs{x})f^2 d\mu_1 \ + \ \int_{\RR}(f')^2 
    d\mu_1\right)$$
    for some universal constant $C>0$, where $(P_k)_{k\in\NN}$ are the orthonormal polynomials associated with $\mu_1$. This inequality is tight in the sense that $\log^2(e +k)$ on the left hand-side cannot be replaced by $a_k \log^2(e +k)$ with a sequence $a_k \longrightarrow \infty$.   
    When the right hand-side is bounded this inequality implies a logarithmic rate of approximation for $f$, which was previously obtained by Lubinsky. We also obtain some rates of approximation for the product measure $\mu_1^{\otimes d}$ in $\RR^d$ via a tensorization argument. Our proof relies on an explicit formula for the generating function of orthonormal polynomials associated with the weight $\frac{1}{2\cosh(\pi x/2)}$ and some complex analysis.
\end{abstract}

\section*{Introduction}
The study of weighted polynomial approximation on the real line has a long history going back to Bernstein in 1924 \cite{bernstein1924probleme} asking for criterion of density of polynomials in the space of continuous functions. Instead of the $L_\infty$ norm, we restrict our attention to the $L_2$ norm. In that case a well known lemma asserts that if a measure $\mu$ on $\RR$ has exponential moments, that is if $\int e^{a\abs{x}}d\mu <\infty$ for some $a>0$, then polynomials are dense in $L^2(\mu)$ : $\forall f \in L^2(\mu)$,
$$E_n(f,\mu) := \inf_{P\in\mathcal{P}_n}\norm{f-P}_2^2 \to 0.$$
Consider, for any $\alpha\geq0$ the probabilities 
$$\mu_\alpha(dx) = \frac{1}{Z_\alpha}e^{-\abs{x}^\alpha}dx$$
It can be shown that the previous criterion is exact : polynomials are dense if and only if $\alpha\geq1$. One might want to quantify the rate at which the polynomial approximation occurs. The systematic study of this question for the measure $\mu_\alpha$ was carried out by Freud in the seventies (see in particular \cite{freud1977markov}). He established in particular that for regular enough functions
\begin{equation}\label{eq26}
    E_n(f,\mu_\alpha) \lesssim n^{1-\alpha}\int_\RR(f')^2d\mu_\alpha
\end{equation}
for $\alpha\geq2$, where here and in the rest of this paper, we write, for real valued functions or sequences, $a\lesssim b$ when there exists a constant $C>0$ such that $a\leq Cb$. Inequality \eqref{eq26} was later extended to $\alpha>1$ by Lubinsky and Levin \cite{levin1987canonical}. The case $\alpha=1$ is critical, it can be actually  shown that the space 
$$ \left \{f,\ \int fd\mu_1 = 0, \quad \int (f')^2d\mu_1\leq 1 \right\}$$
is not compact in $L^2(\mu_1)$,  we refer to \cite{bakry2014analysis} for details. Consequently, a result in the form of \eqref{eq26} is not possible with any given rate. Nevertheless, after a first paper in the $L_1$ setting by Freud, Giroux and Rahman \cite{freud1978approximation}, it was shown by Lubinsky in 2006 \cite{lubinsky2006jackson} that 
\begin{equation}\label{eqLubinsky}
    E_n(f,\mu_1) \ \lesssim \ \frac{\int_{x\leq\sqrt{n}} (f')^2d\mu_1}{\log^2(n+1)} \ + \ \int_{x\geq c\sqrt{n}} f^2 d\mu_1.
\end{equation}
In the $L^2$ setting, it is desirable to obtain more precise information. Given a measure $\mu$, we write for a function $f\in L^2(\mu)$
$$f = \sum_{k\geq0}f_kP_k$$
where $(P_k)_{k\in\NN}$ are the orthonormal polynomials associated to $\mu$ with $\deg(P_k) = k$ and $$ f_k = \langle f\ ,\ P_k\rangle_{L^2(\mu)} $$ are the coefficient of $f$ in this basis. Our main theorem is the following

\begin{theorem}\label{thm:main}
Let $\mu_1$ be the double sided exponential measure, with density $\frac{e^{-\abs{x}}}{2}$ on the real line. For any absolutely continuous function $f$
\begin{equation}
    \sum_{k=1}^{\infty} \log^2(e+k)f_k^2 \ \lesssim \int_{\RR} \log^2(e+\abs{x})f^2 d\mu_1 \ + \ \int_{\RR}(f')^2 
    d\mu_1\label{eq59}
\end{equation}
and
\begin{equation}
        \sum_{k=1}^{\infty} \log^2(e+k)f_k^2 \lesssim \int_\RR\log^2(e+\abs{x})(f')^2d\mu_1.\label{eq60}
\end{equation}
In particular,
\begin{equation}\label{eq_163}
     E_n(\mu_1,f) \lesssim \frac{\int_{\RR} \log^2(e+\abs{x})f^2 d\mu_1 \ + \ \int_{\RR}(f')^2 d\mu_1.}{\log^2(n)}
\end{equation}
Furthermore, the result is tight in a strong sense : for any positive increasing sequence $(a_k)_{k\geq1}$ such that $\lim_{k\to\infty} a_k = +\infty$, there exists a function $f_a$ such that $$\int \log^2(e+\abs{x})(f_a)^2 
 + (f_a')^2 d\mu_1 \leq 1,$$ but 
$$ \sum_{k=1}^{\infty} a_k\log^2(e+k)(f_a)_k^2 = +\infty$$
\end{theorem}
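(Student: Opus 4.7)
For the main inequalities \eqref{eq59} and \eqref{eq60}, I would exploit the Fourier-type identification suggested by the abstract, relating $\mu_1$ (via an isometric transformation) to the weight $w(x) = 1/(2\cosh(\pi x/2))$, whose orthonormal polynomials (Meixner--Pollaczek at $\lambda=1/2$, rescaled) admit an explicit generating function $G(x,t) = \sum_k P_k(x) t^k$ whose only singularities lie at a pair of points on the unit circle. Given this, the coefficients can be expressed as Cauchy contour integrals
$$f_k = \frac{1}{2\pi i}\oint_{|t|=r} F(t)\, t^{-k-1}\, dt, \qquad F(t) = \int_\RR f(x)\, G(x,t)\, d\mu_1(x),$$
and the main work is to estimate $F$ near its singularities on $|t|=1$ and then push the contour toward them. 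The right-hand side of \eqref{eq59} translates into precise regularity of $F$ at those singularities: the term $\int (f')^2 d\mu_1$ yields an $H^1$-type control while $\int \log^2(e+|x|) f^2 d\mu_1$ gives a $\log$-modulus of continuity for $F$; matching these with the singular behavior of $G$, the Cauchy integral produces exactly the weight $\log^2(e+k)$ on $f_k^2$. The inequality \eqref{eq60} is obtained by an integration by parts in the same formula, moving the logarithmic factor from $f$ onto $f'$.

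The approximation rate \eqref{eq_163} is immediate from \eqref{eq59} and Parseval: for the degree-$n$ truncation,
$$E_n(f,\mu_1) = \sum_{k>n} f_k^2 \;\leq\; \frac{1}{\log^2(e+n)} \sum_{k>n} \log^2(e+k)\, f_k^2 \;\lesssim\; \frac{\text{RHS of }\eqref{eq59}}{\log^2(e+n)}.$$

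For the tightness, I would argue by non-compactness of the operator $T\colon \mathcal{H} \to \ell^2$ defined by $(Tf)_k = \log(e+k)\, f_k$, where $\mathcal{H}$ is the Hilbert space whose squared norm is the right-hand side of \eqref{eq59}. The existence of an increasing $a_k \to \infty$ improving \eqref{eq59} is equivalent to $T$ being compact: in one direction, $T$ factors through the compact multiplier $1/\sqrt{a_k}$ on $\ell^2$; conversely, compactness forces $\sup_{\|f\|_\mathcal{H}\leq 1} \sum_{k>K} |(Tf)_k|^2 \to 0$ as $K\to \infty$, and choosing $K_j$ with this tail $\leq 2^{-j}$, the increasing weights $a_k := 1 + \#\{j : K_j < k\}$ improve \eqref{eq59}. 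To exhibit non-compactness, I would use the escaping family
$$\phi_N(x) = \frac{e^{\sqrt{N}/2}}{\log(e+N)}\, \psi(x-\sqrt{N})$$
for a fixed smooth, compactly supported $\psi$, where the shift $x=\sqrt{N}$ matches the correspondence between polynomial degree $N$ and spatial scale $\sqrt{N}$ already visible in Lubinsky's bound \eqref{eqLubinsky}. A direct computation gives $\|\phi_N\|_\mathcal{H} \leq C$, and the generating-function representation of the coefficients yields the lower bound $\sum_{k \geq N/2} \log^2(e+k)\, (\phi_N)_k^2 \geq c$. Given an increasing $a_k \to \infty$, picking $N_j \uparrow \infty$ with $a_{N_j/2} \geq 4^j$ and setting $f_a = \sum_j 2^{-j} \phi_{N_j}$ then satisfies $\|f_a\|_\mathcal{H}^2 \lesssim 1$ while $\sum_k a_k \log^2(e+k)(f_a)_k^2 \gtrsim \sum_j 4^{-j}\cdot 4^j = +\infty$ by essential disjointness of the expansions of the $\phi_{N_j}$.

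The main obstacle is the contour-integral estimate underlying \eqref{eq59}: producing exactly the weight $\log^2(e+k)$ — a logarithmic gain over what trivial Parseval gives — requires matching the precise Sobolev-type regularity of $F$ with the singular behavior of $G$, which is delicate and must be tight on both sides. A related technical point in the tightness argument is the lower bound on $\sum_{k\sim N}\log^2(e+k)(\phi_N)_k^2$: this reverse Markov-type statement, asserting that $\phi_N$ cannot be well-approximated by polynomials of degree $\ll N$, should follow from a two-sided version of the same contour-integral identification.
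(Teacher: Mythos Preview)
Your plan shares the paper's starting point --- passing to the weight $1/(2\cosh(\pi x/2))$ and exploiting the explicit generating function of the Meixner--Pollaczek polynomials --- but the execution diverges in both halves, and each of your routes hits an obstacle the paper's machinery is built to circumvent.

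\textbf{Upper bound.} The assertion that ``$\int (f')^2\,d\mu_1$ yields an $H^1$-type control of $F$'' is false under its natural reading: the Dirichlet integral $\int_{|s|<1}|G_f'(s)|^2\,d\lambda(s)$ equals $\pi\sum_k k\,f_k^2$, and the paper shows explicitly (end of Section~\ref{section_preliminaries}, via the dilated Gaussians $F_\lambda$) that this quantity is \emph{not} bounded by the right-hand side of \eqref{eq59}. So whatever regularity of $F$ near its singularities you hope to extract cannot be a plain Sobolev bound; something strictly finer is required, and your outline does not supply it. The paper's mechanism is: (i) restrict first to $f$ holomorphic on the strip $\{|{\rm Im}\,z|\le 1\}$ with controlled growth; (ii) change variables $z=\arctan s$ from the disk to the vertical strip, after which $H_f'(z)=G_f'(\tan z)/\cos^2 z$ becomes the Laplace transform of a function proportional to $\tfrac{x}{\sinh(\pi x/2)}\Delta_f(x)$ with $\Delta_f(x)=f(x+i)-f(x-i)$; (iii) integrate $|G_f'|^2$ over the shrinking disks $D(0,1-\varepsilon)$ against a weight $\varphi(\varepsilon)$, producing the two-sided formula of Lemma~\ref{lem_general}. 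Taking $\varphi(\varepsilon)=\log^2\varepsilon$ gives $\Gamma_\varphi(k)\simeq\log^2(e+k)$, and the resulting Fourier-side integral is then dominated by the right-hand side of \eqref{eq59} via Parseval and a contour shift $x\mapsto x\pm i$. The passage from \eqref{eq59} to \eqref{eq60} is not an integration by parts but a Poincar\'e inequality for the perturbed measure $\log^2(e+|x|)\,d\nu$ (Lemma~\ref{lem_poincare_perturb}); extension to general $f$ is by density of polynomials.

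\textbf{Tightness.} Your reduction to non-compactness of $T$ is correct and is a nice way to organize the argument (the paper does not frame it this way, though Banach--Steinhaus plays the same closing role). The gap is your choice of test functions. Translated compactly supported bumps $\phi_N$ are not holomorphic on any strip, so the lower-bound half of Lemma~\ref{lem_general} is unavailable to them, and the estimate $\sum_{k\ge N/2}\log^2(e+k)(\phi_N)_k^2\ge c$ that you need is precisely a reverse Markov--Bernstein inequality for the exponential weight --- you flag it as an obstacle, and it is one, with no apparent route short of the machinery you are trying to avoid. The paper instead takes $F_\lambda(x)=\lambda^{-1/2}e^{-\lambda^2x^2/2}$, which are entire, so \eqref{eq_260} applies directly; $\widehat{\Delta_{F_\lambda}}$ is explicit, and after building a suitable convex $\tau$ from the given $(a_k)$ (Lemma~\ref{lem_define_tau}) a direct computation gives $\sum_k a_k\log^2(e+k)(F_\lambda)_k^2\gtrsim \tau(C\lambda^2)/\lambda^2\to\infty$. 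A single bad function $f_a$ then falls out of Banach--Steinhaus, bypassing entirely the ``essential disjointness'' of the $\phi_{N_j}$-expansions that your explicit construction would require and that you have not justified.
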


We conjecture that inequality (\ref{eq59}) may be reversed, and in fact for any absolutely continuous function $f$,
$$ \sum_{k=1}^{\infty} \log^2(e+k)f_k^2 \ \gtrsim \int_{\RR} \log^2(e+\abs{x})f^2 d\mu_1 \ + \ \int_{\RR}(f')^2 d\mu_1. $$

\medskip Comparing to \eqref{eqLubinsky}, we get a stronger control on the coefficients $f_k$, showing that the sequence $(\log^2(e+k)f_k^2)_{k\in\NN}$ is in $\ell^1$ rather than in weak $\ell^1$. The cost is a slightly bigger right hand side. Indeed, 
\begin{align*}
    \int_{\lvert x \rvert \geq c\sqrt{n}} f^2 \, d\mu_1 
    &\leq 
    \frac{1}{\log^2\big(1 + c\sqrt{n}\big)}
    \int_{\lvert x \rvert \geq c\sqrt{n}} f^2 \log^2\big(e + \lvert x \rvert\big) \, d\mu_1 \\[10pt]
    &\lesssim 
    \frac{\int f^2 \log^2\big(1 + \lvert x \rvert\big) \, d\mu_1}{\log^2(n)}.
\end{align*}
An important benefit of \eqref{eq60} over \eqref{eqLubinsky} is that it allows for tensorization in higher dimension.

\begin{theorem}{(Tensorization)} Let $\mu_i$ be a probability measure on $\RR$ for $i=1,\ldots,d$. Assume that  $\mu_i$ satisfies
    $$\sum_{\alpha\geq 1} \varphi_i(\alpha) f_\alpha^2 \leq \int_\RR (f')^2w_i(x) \ d\mu_i $$
    for some positive functions $(\varphi_i)_{1\leq i \leq d}$ where $f_\alpha$ are the coefficients in the orthonormal polynomials of $\mu_i$. 
    Denote $$ \mu = \mu_1\otimes\ldots\otimes\mu_d. $$
    Then, it holds for $\mu$ that 
    $$\sum_{|\alpha| \geq 1} \varphi(\alpha) f_\alpha^2 \leq \int_{\RR^d} \sum_{i=1}^d w_i(x_i) (\partial_i f)^2 d\mu$$
    where $\varphi(\alpha) = \sum \varphi_i(\alpha_i)$ and we set $\varphi_i(0) = 0$ and $|\alpha| = \sum_i |\alpha_i|$. In particular, setting $\phi(n) = \inf_{\abs{\alpha} = n}\varphi(\alpha)$, we get :
    $$E_n(\mu, f) \leq \frac{1}{\phi(n)} \int_{\RR^d} \sum_{i=1}^d w_i(x_i) (\partial_i f)^2 d\mu.$$
    As a consequence, if $f$ is $1$-Lipschitz,
    $$E_n(\mu, f) \leq  \frac{1}{\phi(n)} \EE \max w_i(X_i)$$
\end{theorem}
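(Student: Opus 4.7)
My plan is to exploit the product structure of the orthonormal basis. The orthonormal polynomials for $\mu = \mu_1 \otimes \cdots \otimes \mu_d$ are the tensor products $P_\alpha(x) = \prod_{i=1}^d P^{(i)}_{\alpha_i}(x_i)$, where $(P^{(i)}_k)_{k \geq 0}$ is the orthonormal basis associated with $\mu_i$; together they form a complete orthonormal system in $L^2(\mu)$ because polynomials are dense in each factor. Expanding $f = \sum_\alpha f_\alpha P_\alpha$, the idea is to apply the one-dimensional hypothesis slicewise in each coordinate, then integrate the remaining variables out and sum over directions.

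Fix $i \in \{1,\ldots,d\}$, write $x^{(i)}$ for the vector $x$ with the coordinate $x_i$ removed, and let $\mu^{(i)} = \bigotimes_{j \neq i} \mu_j$. For almost every fixed $x^{(i)}$, consider the slice $t \mapsto f_{x^{(i)}}(t) := f(x_1,\ldots,x_{i-1},t,x_{i+1},\ldots,x_d)$. A direct computation shows that its coefficients in the one-dimensional basis of $\mu_i$ are
$$g_k(x^{(i)}) \ := \ \int f_{x^{(i)}}(t)\, P^{(i)}_k(t)\, d\mu_i(t) \ = \ \sum_{\alpha : \alpha_i = k} f_\alpha \prod_{j \neq i} P^{(j)}_{\alpha_j}(x_j).$$
Applying the hypothesis on $\mu_i$ to $f_{x^{(i)}}$ yields
$$\sum_{k \geq 1} \varphi_i(k)\, g_k(x^{(i)})^2 \ \leq \ \int_\RR (\partial_i f)^2 \, w_i(t)\, d\mu_i(t).$$
Integrating against $\mu^{(i)}$ and invoking Parseval in the remaining coordinates gives $\int g_k^2\, d\mu^{(i)} = \sum_{\alpha : \alpha_i = k} f_\alpha^2$, and extending the sum to $k=0$ costs nothing thanks to $\varphi_i(0)=0$. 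We thus obtain
$$\sum_\alpha \varphi_i(\alpha_i)\, f_\alpha^2 \ \leq \ \int_{\RR^d} w_i(x_i)(\partial_i f)^2\, d\mu,$$
and summing over $i$ produces exactly the tensorized inequality, the restriction to $|\alpha|\geq 1$ following from $\varphi(0)=0$.

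For the approximation corollary, since $\deg P_\alpha = |\alpha|$ the best polynomial approximation in $L^2(\mu)$ satisfies $E_n(\mu,f) = \sum_{|\alpha| > n} f_\alpha^2$. In the cases of interest each $\varphi_i$ is non-decreasing (e.g.\ $\varphi_i(k) = \log^2(e+k)$), which makes $\phi$ non-decreasing and yields $\varphi(\alpha) \geq \phi(n)$ for all $|\alpha| \geq n+1$, since one may decrease any positive coordinate of $\alpha$ to produce a multi-index of size $n$ with smaller $\varphi$. Dividing the tensorized inequality by $\phi(n)$ then gives the stated bound on $E_n(\mu,f)$. For a $1$-Lipschitz $f$, the inequality $\sum_i (\partial_i f)^2 \leq 1$ holds almost everywhere, so $\sum_i w_i(x_i)(\partial_i f)^2 \leq \max_i w_i(x_i)$ pointwise, and integration against $\mu$ yields the Lipschitz corollary.

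The only real technical point is the slicewise application of the 1D hypothesis: for a.e.\ $x^{(i)}$, the slice $f_{x^{(i)}}$ must be absolutely continuous and satisfy the integrability built into the 1D inequality. This is routine via Fubini once $f$ is assumed to lie in the appropriate global Sobolev-type class; if it does not, the right-hand side of the tensorized inequality is already infinite and there is nothing to prove.
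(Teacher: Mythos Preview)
Your argument is correct and follows essentially the same tensorization strategy as the paper: exploit the product basis, apply the one-dimensional hypothesis in each coordinate direction, and use Parseval in the remaining variables. The only cosmetic difference is the order of operations---the paper first projects onto a mode $L_{\alpha_1}$ in one variable and applies the 1D inequality to the resulting function $g_{\alpha_1}(x_2)=\langle f, L_{\alpha_1}\rangle_{\mu_1}$, whereas you first freeze $x^{(i)}$ and apply the 1D inequality to the slice $f_{x^{(i)}}$, then integrate; these are dual and equivalent. Your handling of the $E_n$ and Lipschitz corollaries, including the observation that one needs each $\varphi_i$ non-decreasing so that $\phi$ is non-decreasing, is in fact more explicit than the paper, which leaves those points to the reader.
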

\begin{proof}
    For notational simplicity, we prove the result for $d=2$. Let $L = (L_{\alpha_1})_{\alpha_1\geq0}$ and $M = (M_{\alpha_2})_{\alpha_2\geq0}$ be the orthonormal polynomials for $\mu_1$ and $\mu_2$. Then $L\otimes M$ is the basis of orthonormal polynomials of $\mu$.
    
    Let $\alpha = (\alpha_1,\alpha_2)$ be a double index, and remark that 
    $$f_\alpha = \langle f \ , \ L_{\alpha_1}\otimes M_{\alpha_2} \rangle_{\mu} = \langle \langle f \ , \ L_{\alpha_1} \rangle_{\mu_1} \ , \ M_{\alpha_2} \rangle_{\mu_2} = \left(g_{\alpha_1}\right)_{\alpha_2} $$
    where $g_{\alpha_1} := \langle f \ , \ L_{\alpha_1} \rangle_{\mu_1}$. We apply the hypothesis for $\mu_2$ and find that 
    $$\sum_{\alpha_2 \geq 0} \varphi_2(\alpha_2)\left(g_{\alpha_1}\right)_{\alpha_2}^2 \leq \int_\RR (g_{\alpha_1}')^2(x_2)w_2(x_2) \ d\mu_2(x_2) $$
    Now, $g_{\alpha_1}' = \langle \partial_2 f \ , \ L_{\alpha_1} \rangle_{\mu_1}$, and by Plancherel formula,
    $$ \sum_{\alpha_1 \geq 0} (g_{\alpha_1}')^2 = \int (\partial_2 f)^2 d\mu_1 $$
    Injecting this into the previous equation yields:
    $$\sum_{\alpha_1 \geq 0}\sum_{\alpha_2 \geq 0} \varphi_2(\alpha_2)f_{(\alpha_1,\alpha_2)}^2 \leq \int_{\RR^2} (\partial_2 f)^2w_2(x_2) \ d\mu$$

    By symmetry we obtain a similar inequality with reverse indices, and summing them yields the result.
\end{proof}
\begin{corollary} Let $\mu_1^{\otimes d} = \mu_1 \otimes \dots\otimes \mu_1$ be the product exponential measure on $\RR^d$. Then for any regular enough $f$,
and $n \geq 2$,
$$E_n(f,\mu_1^{\otimes d}) \ \lesssim \ \frac{1}{\log^2(n)}\int_{\RR^d} \sum_{i=1}^d \log^2(e+\abs{x_i}) (\partial_i f)^2d\mu_1^{\otimes d}$$
And if $f$ is $1$-Lipschitz,
$$E_n(f,\mu_1^{\otimes d}) \ \lesssim \ \frac{\EE\max\log^2(X_i)}{\log^2(n)} \lesssim \frac{\log^2\log(d)}{\log^2(n)}. $$
\end{corollary}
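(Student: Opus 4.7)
The plan is to feed inequality \eqref{eq60} from Theorem~\ref{thm:main} into the tensorization theorem and then perform two short auxiliary estimates: an analytic lower bound on the function $\phi(n)$ produced by tensorization, and a probabilistic control of $\EE \log^2(e+\max_i |X_i|)$ for $X_i$ i.i.d.\ double-exponential. Applying the tensorization theorem with $\mu_i = \mu_1$, $\varphi_i(k) = c\log^2(e+k)$ for $k \geq 1$ (with $\varphi_i(0) = 0$ by the theorem's convention), and $w_i(x) = c\log^2(e+|x|)$ — where $c$ is the universal constant of \eqref{eq60} — yields immediately
$$ E_n(f,\mu_1^{\otimes d}) \;\lesssim\; \frac{1}{\phi(n)} \int_{\RR^d} \sum_{i=1}^d \log^2(e+|x_i|)\,(\partial_i f)^2 \, d\mu_1^{\otimes d}, $$
with $\phi(n) := \inf_{|\alpha| = n} \sum_{i:\,\alpha_i \geq 1} \log^2(e+\alpha_i)$.

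The main step will be to check that $\phi(n) \gtrsim \log^2 n$ for $n \geq 2$. Since $x \mapsto \log^2(e+x)$ is concave on $[0,\infty)$ (its second derivative equals $2(1 - \log(e+x))/(e+x)^2 \leq 0$), concentrating mass onto a single coordinate should minimize $\sum_i \log^2(e+\alpha_i)$ subject to $|\alpha|=n$, giving the heuristic value $\log^2(e+n)$. To handle cleanly the boundary behavior caused by the convention $\varphi_i(0) = 0$, I plan to argue as follows: if $\alpha$ has exactly $k$ nonzero coordinates summing to $n$, then $\max_i \alpha_i \geq n/k$, so
$$\sum_{i:\,\alpha_i \geq 1} \log^2(e+\alpha_i) \;\geq\; \max\bigl(\log^2(e+n/k),\; k \log^2(e+1)\bigr).$$
Minimizing this lower bound over $1 \leq k \leq n$ yields $\phi(n) \gtrsim \log^2 n$, which, combined with the display above, proves the first assertion of the corollary.

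For the Lipschitz consequence, $|\nabla f| \leq 1$ gives $\sum_i (\partial_i f)^2 \leq 1$ pointwise, so
$$\sum_{i=1}^d \log^2(e+|x_i|)(\partial_i f)^2 \;\leq\; \max_i \log^2(e+|x_i|) \;=\; \log^2(e+\max_i |x_i|),$$
and integrating gives the first of the two Lipschitz bounds. It then remains to estimate $\EE \log^2(e+\max_i |X_i|)$ for i.i.d.\ $X_i \sim \mu_1$. A union bound yields $\PP(\max_i |X_i| \geq \log d + s) \leq e^{-s}$ for $s\geq 0$, so $\EE \max_i |X_i| = \log d + O(1)$ with sub-exponential fluctuations. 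Applying Jensen's inequality to the concave function $x \mapsto \log^2(e+x)$ then gives $\EE \log^2(e+\max_i |X_i|) \leq \log^2(e+\EE \max_i |X_i|) \lesssim \log^2\log d$. The only nonroutine point in the whole argument is the lower bound on $\phi(n)$; everything else is bookkeeping.
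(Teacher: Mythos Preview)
Your proposal is correct and follows exactly the route the paper intends: the corollary is stated without proof as an immediate consequence of the tensorization theorem together with \eqref{eq60}, and you have filled in the two implicit steps (the lower bound on $\phi(n)$ and the tail estimate for $\max_i|X_i|$) properly.

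One minor simplification for the $\phi(n)$ bound: your concavity observation already finishes the job, so the detour through $\max(\log^2(e+n/k),\,k\log^2(e+1))$ is unnecessary. Extend $\varphi$ to $[0,\infty)$ by linear interpolation on $[0,1]$ (so $\varphi(0)=0$ and $\varphi(x)=\log^2(e+x)$ for $x\geq 1$); since the left derivative at $1$ is $\log^2(e+1)>2\log(e+1)/(e+1)$, this extension is concave on $[0,\infty)$ with $\varphi(0)=0$, hence subadditive. Thus for any multi-index $\alpha$ with $|\alpha|=n\geq 1$,
\[
\sum_{i}\varphi(\alpha_i)\;\geq\;\varphi\Bigl(\sum_i\alpha_i\Bigr)=\varphi(n)=\log^2(e+n),
\]
and the boundary convention $\varphi_i(0)=0$ causes no difficulty. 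Everything else in your argument (the Lipschitz reduction to $\max_i\log^2(e+|x_i|)$, the union bound for $\EE\max_i|X_i|$, and Jensen for the concave function $x\mapsto\log^2(e+x)$) is exactly right.
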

It is interesting to compare those results with what happens for the half sided exponential 
$$d\tilde{\mu}_1(x) = e^{-x}\mathbb{1}_{x\geq0}\ dx.$$
The orthonormal polynomials for $\mu_1$ are the Laguerre polynomials $(L_k)_{k\in\NN}$ defined by
$$L_k(x) = \frac{e^{x}}{k!}\frac{d^k}{dx^k}\left(x^k e^{-x}\right).$$
Their derivatives $(L_k')_{k\geq1}$ form an orthogonal system for the weight $xe^{-x}\mathbb{1}_{x\geq0}$. For any $k,m\geq1$
\begin{equation}\label{eq218}
    \int_{\RR}L_k'L_m' \ xe^{-x}\ dx = k\delta_{km}
\end{equation}
Let $f\in L^2(\tilde{\mu}_1)$, we write
$$f=\sum_{k\geq0}f_kL_k$$
so that, if $f'\in L^2(xe^{-x}\mathbb{1}_{x\geq0})$, we have
$$f' = \sum_{k\geq1}f_kL_k'.$$
From \eqref{eq218} we deduce that
\begin{equation}
    \int (f')^2xe^{-x}\mathbb{1}_{x\geq0} \ dx = \sum_{k\geq1} kf_k^2
\end{equation}
So that,
$$E_n(f,\tilde{\mu}_1) =\sum_{k\geq n+1}f_k^2 \leq \frac{\int_\RR x(f')^2\ d\tilde{\mu}_1}{n+1}$$
which is a much faster rate than \eqref{eq_163}. In particular if $f$ is $1$-Lipschitz, one gets a logarithmic rate for $E_n(f,\mu_1)$ but a linear rate for $E_n(f,\tilde{\mu}_1)$. Lastly, the tensorization for $\tilde{\mu}_1^{\otimes d}$ writes
$$E_n(f,\tilde{\mu}_1^{\otimes d}) \ \leq \ \frac{1}{n+1}\int_{\RR^d} \sum_{i=1}^d x_i(\partial_i f)^2d\mu_1^{\otimes d}$$
And if $f$ is $1$-Lipschitz,
$$E_n(f,\mu_1^{\otimes d}) \ \leq \ \frac{\EE\max(\abs{X_i})}{n+1} \lesssim \frac{\log(d)}{n}.$$

\begin{remark}
    Let us reformulate the phenomena in dimension one. Under the two sided exponential, if one allows to approximate Lipschitz functions not only by polynomials, but by piecewise polynomials with exactly two pieces, one on each half line, one goes from a logarithmic to a linear rate of approximation.
\end{remark}
\section{Preliminaries}\label{section_preliminaries}
The proof of Theorem \ref{thm:main} requires some preparation. The first step is to smooth the density a bit. Remark that 
$$\frac{1}{2\cosh(x)} \leq \exp(-\abs{x}) \leq \frac{1}{2\cosh(x)}. $$
Thus we may as well study the approximation problem for the probability
$$ d\nu(x) = \frac{1}{2\cosh(\pi x /2)}.$$
The $\pi/2$ factor is a normalization the appeal of will be clear later. Before all, we give a rigorous justification of this point,
that it suffices to prove Theorem \ref{thm:main} with $\mu_1$ replaced by $\nu$.

\begin{lemma}\label{lem_158}
    Let $(a_k)_{k\geq 0}$ and $(b_k)_{k\geq 0}$ be two positive sequences, $c, C > 0$. For any $n\geq 0$, we define $A_n =\sum_{k\geq n} a_k$ and $B_n = \sum_{k\geq n}b_k$ and we assume that for all $n\in\NN, \ c B_n \leq A_n\leq C B_n$. Let $(\varphi_k)_{k\in \NN}$ be an increasing positive sequence. If 
    $$\sum_{k\in\NN}b_k\varphi_k \leq \infty$$
    then $\sum a_k\varphi_k$ converge and
        $$c\sum_{k\in\NN} b_k\varphi_k \leq \sum_{k\in\NN} a_k\varphi_k \leq C\sum_{k\in\NN} b_k\varphi_k$$
\end{lemma}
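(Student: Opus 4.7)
The proof is a standard Abel summation (summation by parts) argument; the only content is to convert the tail-comparison hypothesis into a weighted-sum comparison, using that $(\varphi_k)$ is increasing.

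The plan is as follows. Since $(\varphi_k)$ is increasing and positive, write
\[
\varphi_k \;=\; \varphi_0 \;+\; \sum_{j=1}^{k}\Delta_j, \qquad \Delta_j := \varphi_j-\varphi_{j-1}\ge 0,
\]
and substitute this expansion into $\sum_{k\ge 0} a_k\varphi_k$. Because all terms are nonnegative, Tonelli lets me swap the order of summation, and the inner sum over $k\ge j$ collects precisely the tail $A_j$. This yields the identity
\[
\sum_{k\ge 0} a_k\varphi_k \;=\; \varphi_0\, A_0 \;+\; \sum_{j\ge 1}\Delta_j\, A_j,
\]
and, by exactly the same computation,
\[
\sum_{k\ge 0} b_k\varphi_k \;=\; \varphi_0\, B_0 \;+\; \sum_{j\ge 1}\Delta_j\, B_j.
\]

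Once both sums are written in this form, the hypothesis $cB_n\le A_n\le CB_n$ applied termwise, together with the nonnegativity of $\varphi_0$ and of each $\Delta_j$, immediately yields
\[
c\!\left(\varphi_0 B_0 + \sum_{j\ge 1}\Delta_j B_j\right)
\;\le\;
\varphi_0 A_0 + \sum_{j\ge 1}\Delta_j A_j
\;\le\;
C\!\left(\varphi_0 B_0 + \sum_{j\ge 1}\Delta_j B_j\right),
\]
which is the desired two-sided inequality. Finiteness of the $b$-weighted sum (the statement writes ``$\le\infty$'' but evidently means ``$<\infty$'') then forces finiteness of the $a$-weighted sum through the upper bound.

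There is no real obstacle: this is a bookkeeping lemma whose sole role is to transfer Theorem~\ref{thm:main} from the smoothed density $\nu$ to $\mu_1$, once comparability of the tails of the Fourier-type coefficients in the two orthonormal bases has been established. If one wished to avoid even the mild rearrangement step above, an entirely equivalent route is to apply the classical Abel summation identity $\sum_{k=0}^{N} a_k\varphi_k = A_0\varphi_0 - A_{N+1}\varphi_N + \sum_{k=1}^{N} A_k(\varphi_k-\varphi_{k-1})$ and pass to the limit $N\to\infty$, using that $A_{N+1}\varphi_N \le C B_{N+1}\varphi_N \to 0$ as a consequence of $\sum b_k\varphi_k<\infty$.
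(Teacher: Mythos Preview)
Your proof is correct and is precisely the standard summation-by-parts argument the paper invokes; the paper in fact omits the proof entirely, saying only that it ``is proven by a standard argument using summation by parts, which we omit.'' Your write-up fills in exactly those details.
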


Lemma \ref{lem_158} is proven by a standard argument using summation by parts, which we omit.
Now let $\mu$ and $\nu$ are two measures satisfying $c\nu \leq \mu \leq C\nu$. Let $f$ be a function in $L^2(\mu) = L^2(\nu)$. We denote by $f_k^\mu$ and $f_k^\nu$ the coefficients of $f$ in the corresponding basis of orthogonal polynomials. We have for any $n\geq0$
\begin{equation}\label{eq_189}
    c^2\sum_{k\geq n}(f_k^{\nu})^2 = c^2\inf_{deg(P)\leq n}\norm{f-P}_{L^2(\nu)}^2 \leq \norm{f-P}_{L^2(\mu)}^2 \leq \sum_{k\geq n}(f_k^{\mu})^2 \leq C^2\sum_{k\geq n}(f_k^{\mu})^2 
\end{equation}
Thus by Lemma \ref{lem_158} for any positive increasing sequence $(\varphi_k)_{k\geq0}$ we get that 
$$\sum_{k\in\NN}\varphi_k (f_k^{\nu})^2 <\infty \quad \Longleftrightarrow \quad \sum_{k\in\NN}\varphi_k (f_k^{\mu})^2 <\infty$$
in which case we have $$c^2\sum_{k\in\NN}\varphi_k (f_k^{\nu})^2 \leq \sum_{k\in\NN}\varphi_k (f_k^{\mu})^2 \leq C^2\sum_{k\in\NN}\varphi_k (f_k^{\mu})^2.$$
In other words, up to a constant $4$ we may as well establish Theorem \ref{thm:main} for $\nu$ instead of $\mu_1$. As it turns out, the family of orthogonal polynomials for $\nu$
is well known.

\subsection{The Meixner-Pollaczek polynomials.}
Fix a positive integer $\ell$ and consider the $\ell$-fold convolution 
$$
\nu_\ell = \underbrace{\nu * \ldots * \nu}_{\ell \ \text{times}}.
$$
Thus for instance $$ d \nu_2(x) = \frac{x}{2 \sinh(\pi x / 2)} dx. $$ 
While we will not make any usage of the explicit formula for the density of $\nu_{\ell}$, we remark that the density of $\nu_{\ell}$ equals 
$$ \frac{1}{2 \pi} \left|\Gamma \left(\frac{\ell + i x}{2}   \right) \right|^2 = \frac{|\Gamma(\ell/2)|^2}{2 \pi}  \prod_{n=0}^{\infty} \left[ 1 + \left( \frac{x}{\ell + 2n} \right)^2 \right]^{-1}, $$
see Szeg\"o \cite[Appendix]{szeg1939orthogonal}. For $\ell \geq 1, x,s \in \CC$ with $|s| < 1$ we define
$$ G_\ell(x,s) = \frac{e^{x \arctan s}}{(1 + s^2)^{\ell/2}}. $$
By considering the Taylor expansion  in the $s$ variable around $0$, which converges for $|s| < 1$, we see that
\begin{equation}  G_{\ell}(x,s) = \sum_{k=0}^{\infty} P_k^{(\ell)}(x) s^k \label{eq_505} \end{equation}
where $P_k^{(\ell)}$ is a polynomial of degree $k$ whose leading coefficient is $1/k!$.

\begin{lemma} Let $\ell \geq 1$. Then the polynomials $P_0^{(\ell)}, P_1^{(\ell)}, \ldots$ are orthogonal polynomials relative to $\nu_{\ell}$ with
$$ \left \| P_{k}^{(\ell)} \right \|_{L^2(\nu_\ell)}^2 =  {k + \ell - 1 \choose k}. $$
\label{lem_1427}
\end{lemma}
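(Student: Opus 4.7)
The plan is to compute the bilinear generating function integral $\int_\RR G_\ell(x,s) G_\ell(x,t)\, d\nu_\ell(x)$ in closed form and then read off both the orthogonality and the norms by comparing Taylor coefficients against \eqref{eq_505}.

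The key input is the bilateral Laplace transform of $\nu_\ell$. The characteristic function of $\nu$ is $\hat\nu(\xi) = 1/\cosh\xi$ (a standard contour integration, using the $\pi/2$ normalization built into the density), and hence $\widehat{\nu_\ell}(\xi) = \cosh^{-\ell}(\xi)$ by convolution. Analytic continuation in the exponent yields
$$ \int_\RR e^{zx}\, d\nu_\ell(x) = \frac{1}{\cos^\ell z} \qquad \text{for } |\Re z| < \pi/2. $$

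Next I substitute the factored form $G_\ell(x,s) G_\ell(x,t) = ((1+s^2)(1+t^2))^{-\ell/2}\, e^{x(\arctan s + \arctan t)}$ into the integral. For real $s,t \in (-1,1)$ the quantity $z := \arctan s + \arctan t$ lies in $(-\pi/2, \pi/2)$, so the Laplace formula applies, and the tangent-addition identity gives $\cos z = (1 - st)/\sqrt{(1+s^2)(1+t^2)}$. Raising to the $\ell$-th power, the prefactors cancel exactly and I obtain
$$ \int_\RR G_\ell(x,s) G_\ell(x,t)\, d\nu_\ell(x) = \frac{1}{(1-st)^\ell}. $$
Expanding the left side via \eqref{eq_505} and interchanging sum with integral yields $\sum_{j,k \geq 0} s^j t^k \int P_j^{(\ell)} P_k^{(\ell)}\, d\nu_\ell$, while the right side equals $\sum_{k \geq 0} \binom{k + \ell - 1}{k} (st)^k$ by the binomial series. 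Matching coefficients of $s^j t^k$ immediately gives both $\int P_j^{(\ell)} P_k^{(\ell)}\, d\nu_\ell = 0$ for $j \neq k$ and the claimed norm formula.

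The only delicate point is justifying the Fubini exchange of sum and integral, but this is routine: for $|s|,|t| < r$ with $r$ small, $|G_\ell(x,s)|$ grows at most like $e^{|x| \arctan r}$ in $x$, while $\nu_\ell$ has exponential decay at rate $\pi/2$ (from the poles of $\widehat{\nu_\ell}$ at $\pm i\pi/2$), so the double series is absolutely integrable and dominated convergence applies. Once the closed-form integral above is in hand, the remainder of the argument is a comparison of power series coefficients.
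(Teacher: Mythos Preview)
Your proof is correct and follows essentially the same route as the paper: compute the Laplace transform of $\nu_\ell$ as $1/\cos^\ell z$, use the addition formula for $\arctan$ (equivalently the paper's identity $\cos(\arctan s + \arctan t)\sqrt{1+s^2}\sqrt{1+t^2} = 1-st$) to reduce $\int G_\ell(x,s)G_\ell(x,t)\,d\nu_\ell$ to $(1-st)^{-\ell}$, and match Taylor coefficients. The only difference is that you make the Fubini justification explicit, which the paper leaves implicit.
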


\begin{proof} We will use the trigonometric identity
\begin{equation} \cos (\arctan s + \arctan t) \sqrt{1 + s^2} \sqrt{1 + t^2}  = 1-st, \label{eq_1720}
\end{equation}
as well as the following standard integral (see e.g. \cite{araaya2004meixner}): for $a \in \CC$ with $|a| < \pi/2$,
\begin{equation}  \int_{-\infty}^{\infty} e^{\alpha x} d \nu(x) = \frac{1}{\cos \alpha}. \label{eq_1709} \end{equation}
It follows from (\ref{eq_1709}) that
$$ \int_{-\infty}^{\infty} e^{\alpha x} d \nu_{\ell}(x) = \frac{1}{\cos^{\ell} \alpha}. $$
Consequently,
$$
\int_{-\infty}^{\infty} G_{\ell}(x,s) G_{\ell}(x,t) d \nu_{\ell}(x) = \frac{1}{\cos^{\ell} (\arctan s + \arctan t) (1 + s^2)^{\ell/2} (1 + t^2)^{\ell/2}} = \frac{1}{(1-st)^{\ell}}.
$$
Expanding in power series in $s$ and $t$ we thus get
$$ \sum_{k_1, k_2=0}^{\infty}  \langle P_{k_1}^{(\ell)}, P_{k_2}^{(\ell)}\rangle_{L^2(\nu_{\ell})}  s^{k_1} t^{k_2} = \sum_{k=0}^{\infty} {k + \ell - 1 \choose k} (st)^{k},
 $$
and the conclusion follows by comparing coefficients.
\end{proof}

The polynomials $P_0^{(\ell)}(2x),P_1^{(\ell)}(2x),\ldots$ are referred to as the Pollaczek polynomials in Szeg\"o \cite[Appendix, Section 2]{szeg1939orthogonal}
corresponding to $\alpha = \pi/2$ and $\ell = 2 \lambda$. 
In Araaya \cite{araaya2004meixner} these polynomials are reffered to as the Meixner-Pollaczek (MP) polynomials. 

\begin{lemma} Let $f$ be a polynomial. Then
$$	\sum_{k=1}^{\infty} {k + \ell - 1 \choose \ell} \left \langle f, \frac{ P_k^{(\ell)} }{\| P_k^{(\ell)} \|_{L^2(\nu_{\ell})}} \right \rangle_{L^2(\nu_{\ell})}^2  = \frac{1}{4} \int_{-\infty}^{\infty} |f(x+i) -  f(x-i)|^2 d \nu_{\ell+1}(x). $$
\label{lem_1704}
\end{lemma}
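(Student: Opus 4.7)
The plan is to use the explicit generating function $G_\ell$ in (\ref{eq_505}) to compute how the difference operator $f \mapsto f(x+i) - f(x-i)$ acts on the Meixner--Pollaczek basis, and then to apply Parseval in $L^2(\nu_{\ell+1})$.

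The central algebraic observation, which I would establish first, is that
\[ G_\ell(x+i,s) - G_\ell(x-i,s) \;=\; \frac{e^{x \arctan s}}{(1+s^2)^{\ell/2}} \left( e^{i\arctan s} - e^{-i\arctan s} \right) \;=\; \frac{2is\, e^{x\arctan s}}{(1+s^2)^{(\ell+1)/2}} \;=\; 2is\, G_{\ell+1}(x,s), \]
where I use $\sin(\arctan s) = s/\sqrt{1+s^2}$. Extracting the coefficient of $s^k$ on both sides of this equality (via the Taylor expansion (\ref{eq_505}), valid for $|s|<1$) yields the polynomial recursion
\[ P_k^{(\ell)}(x+i) - P_k^{(\ell)}(x-i) \;=\; 2i\, P_{k-1}^{(\ell+1)}(x) \qquad (k \geq 1). \]
This is the only creative step of the proof: the shift $x \mapsto x \pm i$ couples $G_\ell$ to $G_{\ell+1}$ through the factor $e^{\pm i \arctan s}$, cleanly raising the index.

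The rest is bookkeeping. Expanding $f = \sum_k a_k P_k^{(\ell)}$ with $a_k = \langle f, P_k^{(\ell)} \rangle_{L^2(\nu_\ell)} / \|P_k^{(\ell)}\|^2_{L^2(\nu_\ell)}$ and applying the recursion termwise gives
\[ f(x+i) - f(x-i) \;=\; 2i \sum_{k=1}^\infty a_k\, P_{k-1}^{(\ell+1)}(x). \]
Since the $P_{k-1}^{(\ell+1)}$ are orthogonal in $L^2(\nu_{\ell+1})$ with squared norm $\binom{k+\ell-1}{k-1} = \binom{k+\ell-1}{\ell}$ by Lemma \ref{lem_1427}, Parseval yields
\[ \frac{1}{4}\int_{\RR} |f(x+i) - f(x-i)|^2\, d\nu_{\ell+1}(x) \;=\; \sum_{k=1}^\infty \binom{k+\ell-1}{\ell}\, a_k^2. \]
The stated identity then follows after rewriting $a_k^2$ in terms of the orthonormal inner product $\langle f, P_k^{(\ell)} / \|P_k^{(\ell)}\| \rangle^2$ using $\|P_k^{(\ell)}\|^2 = \binom{k+\ell-1}{k}$ from Lemma \ref{lem_1427}.

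The main obstacle is the algebraic identity in Step 1; once it is in hand, the orthogonality of $P_{k-1}^{(\ell+1)}$ in $L^2(\nu_{\ell+1})$ supplied by Lemma \ref{lem_1427} closes the argument. Since $f$ is a polynomial, all sums above are finite and no convergence issue arises; the only subtlety is keeping track of the binomial factors coming from the two different norms $\|P_k^{(\ell)}\|_{L^2(\nu_\ell)}$ and $\|P_{k-1}^{(\ell+1)}\|_{L^2(\nu_{\ell+1})}$.
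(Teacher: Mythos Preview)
Your proof is correct and follows essentially the same route as the paper's: both hinge on the identity $G_\ell(x+i,s)-G_\ell(x-i,s)=2is\,G_{\ell+1}(x,s)$ and then use the orthogonality from Lemma~\ref{lem_1427}. The only cosmetic difference is that you extract the explicit polynomial recursion $P_k^{(\ell)}(x+i)-P_k^{(\ell)}(x-i)=2i\,P_{k-1}^{(\ell+1)}(x)$ and apply Parseval in $L^2(\nu_{\ell+1})$, whereas the paper packages the same computation as a bilinear form $\langle\cdot,\cdot\rangle_{M^2(\nu_{\ell+1})}$ and reads off its diagonal via the generating-function product; the underlying calculation is identical.
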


\begin{proof} For two functions $f,g: \CC \rightarrow \CC$ we write
$$  \langle f, g \rangle_{M^2(\nu_{\ell})} = \frac{1}{4} \int_{-\infty}^{\infty} \left(f(x+i) - f(x-i) \right) \left(\overline{g(x+i) - g(x-i)} \right) d \nu_\ell(x) $$
whenever the integrals converge. This defines a positive semi-definite inner product on a space of functions that includes all polynomials.
We claim that the polynomials $P_1^{(\ell)},P_2^{(\ell)},\ldots$ are orthogonal polynomials also with respect to the scalar product $\langle \cdot, \cdot \rangle_{M^2(\nu_{\ell+1})}$
with 
$$ \langle P_k, P_k \rangle_{M^2(\nu_{\ell+1})} =  {k + \ell - 1 \choose \ell} $$
and with $\langle P_0, P_0 \rangle_2 = 0$. In order to prove our claim, we view $s$ and $t$ as fixed real numbers, and compute $\langle G_{\ell}(x, s), G_{\ell}(x, t) \rangle_{M^2(\nu_{\ell+1})}$ with respect to the $x$-variable. Since
$$ \frac{G_{\ell}(x + i, s) - G_{\ell}(x-i, s)}{2} = \frac{e^{(x+i) \arctan s} - e^{(x-i) \arctan s} }{2 (1+s^2)^{\ell/2}} =  \frac{is \cdot e^{x \arctan s} }{(1+s^2)^{(\ell+1)/2}}.
$$
we have
\begin{align*}  \langle G_{\ell}(x, s), G_{\ell}(x, t) \rangle_{M^2(\nu_{\ell+1})} & = \int_{-\infty}^{\infty} \frac{is \cdot e^{x \arctan s} }{(1+s^2)^{(\ell+1)/2}} \overline{\frac{it \cdot e^{x \arctan t} }{(1+t^2)^{(\ell+1)/2}}} d \nu_{\ell+1}(x) \\ & = s t \int_{-\infty}^{\infty} G_{\ell+1}(x, s) G_{\ell+1}(x, t) d \nu_{\ell+1}(x). \end{align*}
This integral equals $(1 - st)^{-(\ell+1)}$, by the computation of the previous lemma. Therefore, by using (\ref{eq_505}) and expanding in power series in $s$ and $t$,
$$ \sum_{k_1, k_2=0}^{\infty}  \langle P_{k_1}, P_{k_2} \rangle_{M_2(\nu_{\ell+1})}  s^{k_1} t^{k_2} = (st) \sum_{k=0}^{\infty} {k + \ell  \choose k}  (st)^{k} = \sum_{k=1}^{\infty} {k + \ell - 1 \choose k-1}  (st)^{k}.
 $$ This proves the above claim. In order to conclude the proof of the lemma, write $f = \sum_k a_k P_k^{(\ell)}$ with $a_k = \langle f, P_k^{(\ell)} \rangle_{L^2(\nu_{\ell})} / {k + \ell - 1 \choose k}$ and compute $\langle f, f \rangle_{M^2(\nu_{\ell+1})}$.
\end{proof}

For simplicity, we now specialize to our case of interest, $\ell = 1$, though the general case may be analyzed along similar lines. 
When $\ell = 1$ and $P_k = P_k^{(1)}$, Lemma \ref{lem_1704} implies that for any polynomial $f$,
\begin{equation}\label{eq_331}	\sum_{k=1}^{\infty} k \langle f, P_k \rangle_{L^2(\nu)}^2 = \frac{1}{4} \int_{-\infty}^{\infty} |f(x+i) -  f(x-i)|^2 d \nu_2(x).  \end{equation}
where we abbreviated $\nu=\nu_1$. In the next section, we explain how to extend \eqref{eq_331} to holomorphic functions with controlled growth in the strip $\{\abs{Im z} \leq 1\}$ (see Lemma \ref{lem_less_general}). Unfortunately, the right hand side can be made arbitrarily large even for well behaved functions $f$. Indeed, let $\lambda\geq1$ and consider the function
$$F_\lambda(x) = \frac{1}{\sqrt{\lambda}}F_1(\lambda x) = \frac{1}{\sqrt{\lambda}}e^{-\lambda^2 x^2/2}$$
and remark that 
$$\int_\RR \abs{F'_\lambda(x)}^2\log^2{(e+\abs{x})}e^{-\abs{x}} \leq\int \abs{F_1'(\lambda x)}^2 \lambda \ dx = \int_\RR F_1'(x)^2 \ dx = \sqrt{2\pi} $$
where we used that $\log^2(e+\abs{x})e^{-x} \leq 1$ for all real $x$. Furthermore $F_\lambda$ is holomorphic and bounded on any horizontal strip. Let us compute the right hand side of \eqref{eq_331}. First,
$$e^{-\lambda^2(x+i)^2/2} - e^{-\lambda^2(x-i)^2/2} = -2ie^{-\lambda^2(x^2-1)/2}\sin(\lambda^2x)$$
So that,
\begin{align*}
    \frac{1}{4}\int_\RR \abs{F_\lambda(x+i)-F_\lambda(x-i)}^2d\nu_2(x) &= \frac{e^{\lambda^2}}{\lambda}\int_\RR e^{-\lambda^2x^2}\sin^2(\lambda^2x)d\nu_2(x) \\
    &\geq \frac{e^{\lambda^2}}{\lambda}\int_{1/4\lambda^2}^{1/2\lambda^2}e^{-\lambda^2x^2}\sin^2(\lambda^2x)d\nu_2(x) \\
    &\gtrsim \frac{e^{\lambda^2}}{\lambda^{3}}
\end{align*}
which can be made arbitrarily large. 

On the other hand a crude upper-bound is 
$$\frac{1}{4}\int_\RR \abs{F_\lambda(x+i)-F_\lambda(x-i)}^2d\nu_2(x) \lesssim \  e^{\lambda^2}.$$
So that by Lemma \ref{lem_less_general} below, which verifies that  \eqref{eq_331} is applicable here, for any $n\geq1$
$$E_n(\nu,F_\lambda) \lesssim \frac{e^{\lambda^2}}{n}.$$
But we also have, for any $n\geq1$,
\begin{align*}
    E_n(\nu,F_\lambda) &\leq \int_\RR \abs{F_\lambda(x)-0}^2d\nu\\
    &\leq \int_\RR F_\lambda^2(x) \ dx \\
    & =\frac{1}{\lambda}\int_\RR e^{-\lambda^2x^2}\ dx \lesssim\frac{1}{\lambda^2}.
\end{align*}
Putting both bounds together, we finally arrive at :

$$E_n(\nu,F_\lambda)\lesssim\min\left(\frac{e^{\lambda^2}}{n}, \frac{1}{\lambda^2}\right) \lesssim\frac{1}{\log(n)}.$$
This suggests looking at 
$$\sum_{k\geq1}\log^\alpha(e+k)f_k^2$$
for some $\alpha > 0$, instead of 
$$\sum_{k\geq1}kf_k^2.$$
This is made possible in the next section, where we develop a machinery to analyze more general weighted sums of the coefficients $f_k^2$. The functions $F_\lambda$ will be used again in Section \ref{subsec_tightness}, with a more refined analysis, to prove the tightness of Theorem \ref{thm:main}.

\begin{remark}
    Instead of $F_\lambda$ we could have looked at
    $$\tilde{F}_\lambda = \frac{1}{\lambda}e^{-\lambda^2x^2/2} = \frac{1}{\sqrt{\lambda}}F_\lambda$$
    which is $\sqrt{2\pi}$ Lipschitz and arrive at the same conclusions. More generally, given $m\geq 1$ one can construct 
    $$F_\lambda^m = \frac{1}{\lambda^{k_m}}e^{-\lambda^2x^2/C_m}$$
    where $k_m\geq1$ and $C_m>0$ are chosen so that the first $m$ derivatives are bounded :
    $$\norm{(F_\lambda^m)^{(k)}}_\infty \leq 1 \quad \quad k=0,1,\ldots l$$
    and still find that 
    $$\sum_{k}k(F_\lambda^m)_k^2 \xrightarrow[\lambda \to +\infty]{} +\infty$$
\end{remark}
\section{A general formula}
Our goal now is to extend Lemma \ref{lem_1704} in two directions : for a broader class of functions as well as a more general weighting of the coefficients.

We say that $f\in \mathcal{F}_1$ if $f \in L^2(\nu)$ and $f$ admits an holomorphic extension to a neighborhood of the strip $\{\abs{Im(z)} \leq 1\}$,
with controlled growth at infinity: $$ \abs{f(R+iv)} = O(e^{\alpha \abs{R}}) $$ for some $\alpha<\frac{\pi}{4}$, uniformly in $\abs{v}\leq 1$ as $\abs{R}\mapsto \infty$. We say that $f\in\mathcal{F}_2$ if both $f\in\mathcal{F}_1$ and $f'\in\mathcal{F}_1$.

For a positive function $\varphi : (0,1)\mapsto \RR^+$ we define, for all $k\geq0$
$$\Gamma_\varphi(k) = 2k\int_0^1 (1-\eps)^{2k}\varphi(\eps)d\eps. $$
Thus $\Gamma_\varphi(k) = \frac{2k}{2k+1}\EE \varphi(Z_k)$, where $Z_k \sim (2k+1)(1-x)^{2k}1_{0\leq x\leq1}\ dx$. For an integrable function $g$ we also define its Fourier transform :
$$\forall v\in\RR, \quad \hat{g}(v) = \int_\RR g(x)e^{ixv}dx.$$
\noindent The goal of this section is to establish the two following formulas:
\begin{lemma}\label{lem_less_general}
Let $f\in\mathcal{F}_1$, for all $k\geq0$, denote by $f_k = \langle f,P_k \rangle$ the coefficients of $f$ in the basis of MP polynomials. The following formula holds:
\begin{equation}\label{eq_k}
\sum_{k=1}^\infty kf_k^2 = \frac{1}{4} \int_{-\infty}^{\infty} |f(x+i) -  f(x-i)|^2 d \nu_2(x).
\end{equation}
\end{lemma}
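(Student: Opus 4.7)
The plan is to identify $g(x) := (f(x+i) - f(x-i))/(2i)$ as an element of $L^2(\nu_2)$, compute its Fourier coefficients in the complete orthogonal system $\{P_j^{(2)}\}_{j \geq 0}$, and conclude by Parseval. Membership $g \in L^2(\nu_2)$ is straightforward: since the density of $\nu_2$ behaves like $|x| e^{-\pi|x|/2}$ at infinity, and $|g(x)|^2 \lesssim e^{2\alpha|x|}$ with $2\alpha < \pi/2$ by hypothesis, the integral $\int |g|^2 \, d\nu_2$ is finite. In particular the right-hand side of \eqref{eq_k} is finite, even though we do not yet know this for the left-hand side.

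The core of the argument is the identity $\langle g, P_j^{(2)} \rangle_{L^2(\nu_2)} = (j+1) f_{j+1}$ for every $j \geq 0$. To establish it, I would substitute $z = x \pm i$ in $\int f(x \pm i) P_j^{(2)}(x) \, d\nu_2(x)$ to turn them into contour integrals along $\Im z = \pm 1$, and then shift both contours back to the real axis. The integrands are holomorphic in the strip $|\Im z| \leq 1$: $f$ is by assumption, $P_j^{(2)}$ is entire, and the potential singularities of the shifted density at $z = \pm i$ are removable because the factor $(z \mp i)$ in the numerator cancels the simple zero of $\sinh(\pi(z \mp i)/2)$. The condition $\alpha < \pi/4$ ensures the integrands decay as $|\Re z| \to \infty$ fast enough for the vertical contributions to vanish. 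Invoking $\sinh(\pi(x \pm i)/2) = \pm i \cosh(\pi x/2)$ to convert the shifted density back into that of $\nu$, one arrives at
$$ \int (f(x+i) - f(x-i)) P_j^{(2)}(x) \, d\nu_2(x) = 2i \int f(x) Q_j(x) \, d\nu(x), $$
where $Q_j(x) := \Re\bigl[(x+i) P_j^{(2)}(x+i)\bigr]$ is a real polynomial of degree $j+1$.

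The identification $Q_j = (j+1) P_{j+1}$ is then obtained via generating functions. A direct computation using $e^{i \arctan s} = (1+is)/\sqrt{1+s^2}$ gives
$$ \sum_{j \geq 0} Q_j(x) s^j = \Re\bigl[(x+i) G_2(x+i, s)\bigr] = \frac{(x-s) e^{x \arctan s}}{(1+s^2)^{3/2}} = \partial_s G_1(x, s), $$
and comparing with \eqref{eq_505} for $\ell = 1$ yields $Q_j = (j+1) P_{j+1}$. Hence $\langle g, P_j^{(2)} \rangle_{L^2(\nu_2)} = (j+1) f_{j+1}$, and Parseval's identity in $L^2(\nu_2)$ combined with $\|P_j^{(2)}\|_{L^2(\nu_2)}^2 = j+1$ from Lemma \ref{lem_1427} gives
$$ \frac{1}{4} \int |f(x+i) - f(x-i)|^2 \, d\nu_2 = \|g\|^2_{L^2(\nu_2)} = \sum_{j \geq 0} (j+1) f_{j+1}^2 = \sum_{k \geq 1} k f_k^2. $$
The delicate step is the contour shift: the hypothesis $\alpha < \pi/4$ is tight, because shifting by one unit in the imaginary direction costs an $e^{\pi|x|/2}$ factor against $\nu_2$, which just barely balances the $e^{\alpha|x|}$ growth of $f$.
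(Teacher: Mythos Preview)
Your proof is correct and takes a genuinely different route from the paper's. The paper proves Lemma \ref{lem_less_general} by representing $\sum_k k f_k^2$ as the Dirichlet integral $\frac{1}{\pi}\int_{|s|<1}|G_f'(s)|^2\,d\lambda(s)$ (Lemma \ref{lem_1052}), conformally mapping the disc to the strip $|\Re z|<\pi/4$ via $z=\arctan s$, expressing $H_f'(z)$ as the Laplace transform of $\psi_f(x)=\frac{ix}{4\sinh(\pi x/2)}(f(x-i)-f(x+i))$ through a contour shift, and finally invoking Plancherel on each vertical line of the strip. Your argument bypasses all of this: you work directly in $L^2(\nu_2)$, identify the coefficients of $g$ against $\{P_j^{(2)}\}$ via a contour shift and the generating-function identity $\Re[(x+i)G_2(x+i,s)]=\partial_s G_1(x,s)$, and finish with Parseval. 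In effect you extend the polynomial identity of Lemma \ref{lem_1704} straight to $\mathcal{F}_1$, whereas the paper re-derives it through Paley--Wiener theory.

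What each approach buys: yours is shorter and more transparent for this particular identity, and reveals the structural fact that the adjoint of the difference operator intertwines $P_j^{(2)}$ with $(j+1)P_{j+1}^{(1)}$. The paper's route, on the other hand, is not chosen for economy here but because the Dirichlet-integral representation is precisely what is needed for the weighted version, Lemma \ref{lem_general}: there one restricts the area integral to subdiscs $D(0,1-\eps)$ and integrates against $\varphi(\eps)$, which has no obvious analogue in your Parseval-in-$L^2(\nu_2)$ framework. So the paper's machinery is an investment that pays off in the next lemma; your argument is the natural one if Lemma \ref{lem_less_general} were the endpoint. Your closing remark on the tightness of $\alpha<\pi/4$ is also accurate: it is exactly the threshold for $g\in L^2(\nu_2)$, though the contour shift itself only needs $\alpha<\pi/2$.
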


\begin{lemma}\label{lem_general} Let $f\in\mathcal{F}_1$, for all $k\geq0$, denote by $f_k = \langle f,P_k \rangle$ the coefficients of $f$ in the basis of MP polynomials.
Denote  $\Delta_f(x) = f(x+i)-f(x-i)$, and for real $u,x$, $$ K_u(x) = \frac{xe^{ux}}{\sinh(\pi x/2)}. $$ Let $\varphi$ be a positive function on the interval $(0,1)$
and let $\Phi(t) = \int_0^t \varphi(u)\ du$. Then
\begin{equation}
\sum_{k=1}^\infty \Gamma_\varphi(k)f_k^2 \leq \frac{1}{8\pi} \int_{-\pi/4}^{\pi/4} \int_{\RR} \ \abs{\widehat{K_u\Delta_f}}^2 \ \Phi\left(\left(\pi -4\abs{u}\right)e^{-2\abs{v}}\right) dvdu
\end{equation}
and
\begin{equation}\label{eq_260}
\sum_{k=1}^\infty \Gamma_\varphi(k)f_k^2 \geq \frac{1}{8\pi} \int_{-\pi/4}^{\pi/4} \int_{\RR} \ \abs{\widehat{K_u\Delta_f}}^2 \ \Phi\left(\frac{1}{2\pi}\left(\pi -4\abs{u}\right)e^{-2\abs{v}}\right) dvdu
\end{equation}
where the convention $\varphi=0$ outside of $(0,1)$ is taken to extend $\Phi$ to $(0,\pi).$

\end{lemma}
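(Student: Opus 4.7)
The strategy is to derive an \emph{exact} identity for $\sum_{k\geq 1}\Gamma_\varphi(k) f_k^2$ via a holomorphic change of variables onto the unit disk, and then to compare the natural weight thus produced to the claimed weight via elementary trigonometric inequalities.

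Step one: compute $H(w):=\widehat{K_u\Delta_f}(v)$ in closed form, where $w := u+iv$ and $|u|<\pi/4$. Using the identity $K_u(x) e^{ivx}\,dx = 2 e^{wx}\,d\nu_2(x)$ together with the relation $P_k(x+i)-P_k(x-i) = 2i\,Q_{k-1}(x)$ established in the proof of Lemma~\ref{lem_1704} (where $Q_m := P_m^{(2)}$), we have
$$H(w) \;=\; 2\int_\RR e^{wx}\Delta_f(x)\,d\nu_2(x) \;=\; 4i\sum_{k\geq 1} f_k \int_\RR e^{wx}Q_{k-1}(x)\,d\nu_2(x).$$
The individual integrals can be read off from the generating function $G_2(x,s) = e^{x\arctan s}/(1+s^2)$: combining (\ref{eq_1709}) with the identity $\sqrt{1+s^2}\cos(w+\arctan s) = \cos w - s\sin w$ gives
$$\int_\RR e^{wx}G_2(x,s)\,d\nu_2(x) \;=\; \frac{1}{(\cos w - s\sin w)^2} \;=\; \sec^2(w)\sum_{m\geq 0}(m+1)(s\tan w)^m,$$
so $\int e^{wx}Q_m(x)\,d\nu_2(x) = (m+1)\sec^2(w)\tan^m(w)$. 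Setting $G(z):=\sum_k f_k z^k$, this produces the clean closed form
$$H(w) \;=\; 4i\sec^2(w)\,G'(\tan w).$$

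Step two: the map $z=\tan w$ is a biholomorphism from $\{|u|<\pi/4,\,v\in\RR\}$ onto the unit disc $D$, with Jacobian $du\,dv = dA_z/|\sec w|^4$. Consequently,
$$\int_{-\pi/4}^{\pi/4}\!\int_\RR |H(w)|^2\,\Phi(1-|\tan w|)\,dv\,du \;=\; 16\int_D |G'(z)|^2\,\Phi(1-|z|)\,dA_z.$$
Expanding in polar coordinates, orthogonality of the monomials collapses the right-hand side to $2\pi\sum_{k\geq 1} k^2 f_k^2\int_0^1 r^{2k-1}\Phi(1-r)\,dr$; integrating by parts (using $\Phi(0)=0$) turns $\Phi$ into $\varphi$ and gives the value $\tfrac{\pi}{2}\sum_{k\geq 1}\Gamma_\varphi(k)f_k^2$. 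I thus obtain the exact identity
$$\sum_{k\geq 1}\Gamma_\varphi(k) f_k^2 \;=\; \frac{1}{8\pi}\int_{-\pi/4}^{\pi/4}\!\int_\RR |\widehat{K_u\Delta_f}(v)|^2\,\Phi\bigl(1-|\tan(u+iv)|\bigr)\,dv\,du.$$

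Step three: sandwich $1 - |\tan w|$ between multiples of $(\pi - 4|u|)e^{-2|v|}$. From $|\tan w|^2 = (\cosh 2v - \cos 2u)/(\cosh 2v + \cos 2u)$, rationalising the numerator gives
$$1-|\tan w| \;=\; \frac{2\cos 2u}{A+\sqrt{AB}},\qquad A := \cosh 2v+\cos 2u,\ B := \cosh 2v - \cos 2u.$$
For the upper bound, $\sqrt{AB} = \sqrt{\cosh^2 2v - \cos^2 2u}\geq |\sinh 2v|$ together with $\cos 2u\geq 0$ yields $A+\sqrt{AB}\geq \cosh 2v + |\sinh 2v| = e^{2|v|}$; combined with $2\cos 2u\leq \pi-4|u|$ (from $\sin\theta\leq\theta$ at $\theta = \pi/2-2|u|$), this gives $1-|\tan w|\leq (\pi-4|u|)e^{-2|v|}$. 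For the lower bound, $\sqrt{AB}\leq A$ yields $A+\sqrt{AB}\leq 2A\leq 4\cosh 2v\leq 4e^{2|v|}$, and $\cos 2u \geq (\pi-4|u|)/\pi$ (from $\sin\theta\geq 2\theta/\pi$) then gives $1-|\tan w|\geq (\pi-4|u|)/(2\pi)\cdot e^{-2|v|}$. The two inequalities of the lemma follow from the exact identity together with the monotonicity of $\Phi$.

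The step I expect to be most delicate is justifying the termwise interchanges in step one (swap of sum and integral, and identification of the resulting power series as $G'(\tan w)$) under the minimal regularity $f\in\mathcal{F}_1$; once the expansion $f=\sum f_k P_k$ in $L^2(\nu)$ is controlled enough for Fubini, the Bergman-space computation and the trigonometric bookkeeping are routine.
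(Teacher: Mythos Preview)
Your argument is correct and rests on the same three pillars as the paper's proof: the conformal map $z=\tan w$ from the strip to the disc, a weighted Bergman identity for $G'$, and a sandwich of the resulting weight between multiples of $(\pi-4|u|)e^{-2|v|}$. The organization, however, is genuinely cleaner than the paper's in Steps~2--3. The paper never writes down your exact identity
\[
\sum_{k\geq 1}\Gamma_\varphi(k) f_k^2 \;=\; \frac{1}{8\pi}\int_{|u|<\pi/4}\!\int_\RR |\widehat{K_u\Delta_f}(v)|^2\,\Phi\bigl(1-|\tan(u+iv)|\bigr)\,dv\,du;
\]
instead it integrates $|G_f'|^2$ over the shrunken discs $D(0,1-\varepsilon)$ (Proposition~\ref{prop_234}), pushes each forward to a region $A(1-\varepsilon)=\arctan\bigl(D(0,1-\varepsilon)\bigr)$, describes $A(r)$ explicitly via a M\"obius computation (Lemma~\ref{lem_467}), and only then applies Fubini in $\varepsilon$. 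The quantity $a(u,v)$ thereby produced is characterised implicitly through the parameters $C_r$ and $R_{\theta,r}$ of Lemma~\ref{lem_467}, and the paper bounds it by unwinding those formulas. But of course $a(u,v)$ is nothing other than $1-|\tan(u+iv)|$, and your direct computation of this via $|\tan w|^2=(\cosh 2v-\cos 2u)/(\cosh 2v+\cos 2u)$ bypasses the M\"obius detour entirely while landing on exactly the same bounds.

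Concerning the delicacy you flag in Step~1: the paper does \emph{not} expand $\Delta_f$ in the $Q_k$-basis. It obtains the same formula $\widehat{K_u\Delta_f}(v)=-4i\,H_f'(u+iv)$ (equivalently your $H(w)=4i\sec^2(w)G'(\tan w)$, via $H_f=G_f\circ\tan$) by the contour-shifting argument of Lemma~\ref{lem_less_general}, which uses only the growth hypothesis in $\mathcal{F}_1$ and never touches the $L^2$-expansion of $f$. So if you want to avoid the termwise-interchange issue altogether, simply quote formula~(\ref{eq_1024}); your Steps~2--3 then go through verbatim for every $f\in\mathcal{F}_1$.
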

We begin with proving Lemma \ref{lem_less_general}. Abbreviate $G(x,s) = G_1(x,s)$ from (\ref{eq_505}). For $f \in L^2(\nu)$ we write
$$ G_f(s) = \sum_{k=0}^{\infty} \langle f, P_k \rangle_{L^2(\nu)} s^k = \int_{-\infty}^{\infty} G(x,s) f(x) d\nu(x). $$
Thus, for instance, if $f(x) = 1$ then $G_f(s) = 1$ and if $f(x) = x$ then 
$G_f(s) = s$. In the general case,
\begin{equation} G_f'(s) = \sum_{k=1}^{\infty} k \langle f, P_k \rangle_{L^2(\nu)} s^{k-1}. \label{eq_929} \end{equation}
When $f \in L^2(\nu)$, the power series defining $G_f$ converges to a holomorphic function in the unit disc $\{ s \in \CC \, ; \, |s| < 1 \}$. 
\begin{lemma} We have
$$
\frac{1}{\pi} \int_{|s| < 1} |G_f'(s)|^2 \, d\lambda(s) = \sum_{k=1}^{\infty} k \langle f, P_k \rangle_{L^2(\nu)}^2,
$$
where $\lambda$ stands for the Lebesgue measure. \label{lem_1052}
\end{lemma}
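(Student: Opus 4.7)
The plan is to identify this as a standard Parseval-style identity for holomorphic functions on the unit disc, applied to the power series $G_f'(s) = \sum_{k\geq 1} k f_k s^{k-1}$, where I abbreviate $f_k = \langle f, P_k\rangle_{L^2(\nu)}$. Since, for $\ell=1$, Lemma \ref{lem_1427} gives $\|P_k\|_{L^2(\nu)}^2 = \binom{k}{k} = 1$, the coefficients $(f_k)$ form the $\ell^2$ sequence of an orthonormal expansion, so $G_f(s) = \sum_{k \geq 0} f_k s^k$ is holomorphic on the open unit disc and $G_f'$ is holomorphic there as well.

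First I would pass to polar coordinates $s = \rho e^{i\theta}$ and, for fixed $0 < r < 1$, write
\[
\frac{1}{\pi}\int_{|s|<r}|G_f'(s)|^2\,d\lambda(s) = 2\int_0^r \rho\left(\frac{1}{2\pi}\int_0^{2\pi}|G_f'(\rho e^{i\theta})|^2\,d\theta\right)d\rho.
\]
On each circle $|s|=\rho<1$ the power series for $G_f'$ converges absolutely and uniformly, so I can expand $|G_f'(\rho e^{i\theta})|^2$ and use the orthogonality $\int_0^{2\pi} e^{i(k-m)\theta}d\theta = 2\pi\,\delta_{km}$ to obtain
\[
\frac{1}{2\pi}\int_0^{2\pi}|G_f'(\rho e^{i\theta})|^2\,d\theta = \sum_{k=1}^{\infty} k^2 f_k^2 \rho^{2k-2}.
\]

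Next I would integrate in $\rho$: since all summands are non-negative, Tonelli allows swapping sum and integral, giving
\[
\frac{1}{\pi}\int_{|s|<r}|G_f'(s)|^2\,d\lambda(s) = 2\sum_{k=1}^{\infty} k^2 f_k^2 \int_0^r \rho^{2k-1}\,d\rho = \sum_{k=1}^{\infty} k f_k^2 \, r^{2k}.
\]
Finally, letting $r \to 1^-$ and applying the monotone convergence theorem on both sides yields the claimed identity, with both sides allowed to be $+\infty$ (the identity is meaningful regardless of convergence).

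There is essentially no obstacle here; the only points requiring care are the justification of Tonelli and of the monotone-convergence limit, both of which are immediate from non-negativity. The formula $\|P_k\|_{L^2(\nu)}^2 = 1$ for $\ell=1$ is what ensures that the coefficients appearing in the power series expansion of $G_f$ are exactly the $L^2(\nu)$-inner products $\langle f, P_k\rangle$, without extra normalizing factors.
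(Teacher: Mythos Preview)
Your proof is correct and follows essentially the same approach as the paper's: polar integration together with the orthogonality relation $\int_0^{2\pi} e^{i(k-l)\theta}\,d\theta = 2\pi\delta_{kl}$. You have simply written out in detail (including the Tonelli and monotone convergence justifications) what the paper states tersely in a single sentence.
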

\begin{proof}
    The proof follows from polar integration after remarking that for any $k,l\in \NN$, $$\int_{C(0,r)}z^k\bar{z}^l\ dz=r^{k+l}\int_{0}^{2\pi} e^{i(k-l)\theta}\ d\theta = 2\pi r^{k+l}\delta_{kl}.$$ 
where $C(0,r)$ is the complex circle of center $0$ and radius $r$.
\end{proof}

From now on we assume that $f\in\mathcal{F}_1$. We work with the standard branch of $\log(z)$ and $\sqrt{z}$ with domain $\CC \setminus (-\infty, 0]$. Recall that $\log (1+x) = \sum_n \frac{(-1)^{n+1} x^n}{n}$ and
$$ \arctan s = \sum_{n=0}^{\infty} (-1)^n \frac{s^{2n+1}}{2n+1} = \frac{i}{2} \log \frac{1 - i s}{1 + i s} $$
The M\"obius transformation $s \mapsto \frac{1 - i s}{1 + is}$ maps the unit disc $\{ s \in \CC \, ; \, |s| < 1 \}$ to the right half plane. Thus  $s \mapsto \arctan(s)$ maps the unit disc to the strip $|Re(z)| \leq \pi/4$. Changing variables 
$$ z = \arctan(s) $$
we observe that we need no Jacobian in the formula
\begin{equation}  \int_{|s| < 1} \left|\frac{d G_f(s)}{d s} \right|^2 \, d\lambda(s) = \int_{|Re(z)| < \pi/4} \left|\frac{d G_f(s(z))}{d z} \right|^2 \, d\lambda(z) = \int_{|Re(z)| < \pi/4} \left|H_f'(z) \right|^2 \, d\lambda(z)
  \label{eq_1712} \end{equation}
where $s(z) = \tan z$ and $H_f(z) = G_f(s(z))$. Setting $H(x,z) = G(x,s(z))$ we have $$ H(x, z) = \cos(z) e^{xz} $$
 and
$$ H_f(z) = \cos(z) \int_{-\infty}^{\infty} e^{xz} f(x) d \nu(x). $$

\begin{proof}[Proof of Lemma \ref{lem_less_general}.] 
Let $a,b \in \CC$ be such that $\tilde{f}(z) = f(z) + az + b$ vanishes at $z = i$ and at $z = -i$. The meromorphic function
	$$ F(z) = \frac{\tilde{f}(z)}{4 \cosh(\pi z / 2)} $$
is in fact holomorporphic in the strip $|Im(z)| < 3$; the denominator has a simple zero at $z = \pm i$, yet the numerator vanishes there too. We have
$$ H_f(z) = H_{\tilde{f}}(z) - H_{ax+b}(z) = H_{\tilde{f}}(z) - \left( a \tan(z) + b \right) $$
where
$$ H_{\tilde{f}}(z) = \int_{-\infty}^{\infty} \left( e^{(x + i)z} + e^{(x - i) z} \right) F(x) dx  = \int_{-\infty}^{\infty} e^{x z} \left( F(x - i ) + F(x + i ) \right) dx.
 $$
The last passage follows by changing the contour of integration from $\RR$ to $\RR \pm i $, which is allowed as $|Re(z)| < \pi/4$ and the holomorphic function $e^{x z} F(x)$ decays to $0$ at infinity on the strip $|Im(x)| \leq 1$. Consequently,
\begin{equation}  H_{f}'(z) = \int_{-\infty}^{\infty} e^{x z} x \left(  F(x - i ) + F(x + i ) \right)   dx - \frac{a}{\cos^2 z} = \int_{-\infty}^{\infty} e^{x z} \psi_f(x)  dx 
	\label{eq_1024} \end{equation}
for $$ \psi_f(x) = x ( F(x-i) +  F(x + i) ) - \frac{ax}{2 \sinh(\pi x/2)} = \frac{ix}{4 \sinh(\pi x / 2)} \left(  f(x-i) -  f(x+i)  \right). $$ 
The function $\psi_f$ decays exponentially at infinity, and formula (\ref{eq_1024}) presents
$ H_f'$ as the Laplace transform of $\psi_f$. It is a standard theme in Paley-Wiener theory to represent holomorphic functions 
on the strip as a Laplace transform and use the Parseval identity as follows: 
\begin{align*}  \int_{|\Re(z)| < \pi/4} & \left|H_f'(z) \right|^2 \, d\lambda(z)
 = \int_{-\pi/4}^{\pi/4} \left( \int_{-\infty}^{\infty}  \left|\int_{-\infty}^{\infty} e^{x (u + i v)} \psi_f(x)  dx \right|^2   dv \right) du \\ & =  	
	\int_{-\pi/4}^{\pi/4} \int_{-\infty}^{\infty}  \left| \widehat{e^{xu} \psi_f(x)}(v)  \right|^2   dv du = 2 \pi 	\int_{-\pi/4}^{\pi/4} \int_{-\infty}^{\infty}   e^{2xu} |\psi_f(x)|^2 dx  du \\ & = 2 \pi \int_{-\infty}^{\infty}   \frac{\sinh(\pi x / 2)}{x} |\psi_f(x)|^2 dx.
\end{align*}
Therefore
$$ \frac{1}{\pi} \int_{|\Re(z)| < \pi/4}  \left|H_f'(z) \right|^2 \, d\lambda(z) = \frac{1}{8} \int_{-\infty}^{\infty}   \frac{x}{\sinh(\pi x / 2)} |f(x-i) -  f(x+i)|^2 dx $$
and the conclusion follows from Lemma \ref{lem_1052} and formula (\ref{eq_1712}).
\end{proof}
We will prove Lemma \ref{lem_general} by slightly modifying the previous proof. We begin with an elementary lemma about analytic functions on the disk.
\begin{proposition}\label{prop_234} Let $G=\sum_{k=0}^\infty a_kz^k$ be an analytic function on the unit disk. Then, for any positive weight $\varphi$,
$$\int_0^1\int_{D(0,1-\eps)} \abs{G'}^2\varphi(\eps) \ d\eps = \frac{\pi}{2}\sum_{k=1}^\infty \Gamma_\varphi(k)\ a_k^2 $$
\end{proposition}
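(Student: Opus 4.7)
The plan is a direct calculation by expanding $G'$ as a power series, evaluating the inner area integral in polar coordinates, and then recognizing $\Gamma_\varphi(k)$ in what remains.

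First I would write $G'(z) = \sum_{k \geq 1} k a_k z^{k-1}$ and pass to polar coordinates $z = r e^{i\theta}$ in the disk $D(0, 1-\varepsilon)$. Expanding $|G'(z)|^2$ as a double sum and using the orthogonality relation
\begin{equation*}
\int_0^{2\pi} e^{i(k-l)\theta} \, d\theta = 2\pi \, \delta_{kl},
\end{equation*}
(already exploited in Lemma \ref{lem_1052}), the cross terms vanish and one is left with
\begin{equation*}
\int_{D(0, 1-\varepsilon)} |G'|^2 \, d\lambda = 2\pi \sum_{k=1}^{\infty} k^2 |a_k|^2 \int_0^{1-\varepsilon} r^{2k-1} \, dr = \pi \sum_{k=1}^{\infty} k |a_k|^2 (1-\varepsilon)^{2k}.
\end{equation*}
All interchanges of sum and integral here are justified by nonnegativity of the summands via Tonelli.

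Next I would multiply by $\varphi(\varepsilon)$ and integrate over $\varepsilon \in (0,1)$. Applying Tonelli once more,
\begin{equation*}
\int_0^1 \int_{D(0, 1-\varepsilon)} |G'|^2 \varphi(\varepsilon) \, d\lambda \, d\varepsilon = \pi \sum_{k=1}^{\infty} k |a_k|^2 \int_0^1 (1-\varepsilon)^{2k} \varphi(\varepsilon) \, d\varepsilon.
\end{equation*}
By the very definition $\Gamma_\varphi(k) = 2k \int_0^1 (1-\varepsilon)^{2k} \varphi(\varepsilon) \, d\varepsilon$, the quantity $k \int_0^1 (1-\varepsilon)^{2k} \varphi(\varepsilon) \, d\varepsilon$ is exactly $\Gamma_\varphi(k)/2$, so the right hand side rearranges to $\frac{\pi}{2} \sum_{k=1}^\infty \Gamma_\varphi(k) a_k^2$, which is the desired identity.

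There is essentially no obstacle here; the only subtlety is the bookkeeping of constants and confirming that swapping sum and integrals is legitimate, which is automatic from positivity. If one wanted to be careful about the case where $\sum \Gamma_\varphi(k) a_k^2 = +\infty$, the identity still holds as an equality in $[0, +\infty]$ thanks to Tonelli.
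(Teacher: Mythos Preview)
Your proof is correct and follows essentially the same approach as the paper: both arguments expand $|G'|^2$ as a double series, use polar integration together with the orthogonality relation $\int_0^{2\pi} e^{i(k-l)\theta}\,d\theta = 2\pi\delta_{kl}$ to diagonalize, and then identify $\Gamma_\varphi(k)$ in the resulting $\varepsilon$-integral. Your write-up is in fact more detailed than the paper's, which merely sketches the polar-integration step.
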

\begin{proof}
Remember that 
$$\Gamma_\varphi(k) = 2k\int_0^1 (1-\eps)^{2k}\varphi(\eps)d\eps. $$
The proof again follows from polar integration and the formula $$\int_{C(0,r)}z^k\bar{z}^l\ dz=r^{k+l}\int_{0}^{2\pi} e^{i(k-l)\theta}\ d\theta = 2\pi r^{k+l}\delta_{kl}.$$ 
valid for any $k,l\in \NN$.
\end{proof}
Thus we need to understand the integral of $G_{f}'$ 
on a disk of radius $r=1-\eps$. We will use the same change of variable as before so we need to understand the image of disk of radius $r$ by $\arctan$. For the reader convenience, we include a graphical representation of $A(1-\varepsilon) = \arctan(D(0,1-\varepsilon))$.

\begin{figure}[h]
    \centering
    \includegraphics[width=0.72\textwidth]{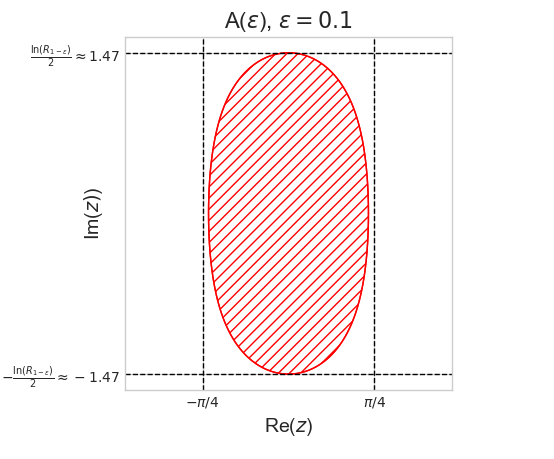} 
    \caption{Illustration of the region $A(\varepsilon) = \arctan(D(0,1-\varepsilon))$ for $\varepsilon=0.1$. The dashed vertical lines correspond to $Re(z) = \pm \pi/4$ while the horizontal lines correspond to $Im(z) =\pm \frac{1}{2}\log R_{1-\epsilon} = \pm \frac{1}{2}\log R_{0,1-\epsilon}$ where $R_{\theta,r}$ is defined in Lemma \ref{lem_467} below.} 
    \label{fig:arctan_disk}
\end{figure}

\begin{lemma}\label{lem_467} Let $0<r<1$ be a positive number, denote $A(r) = \arctan(D(0,r))$ the image of the disk. Let $C_r = \frac{1+r^2}{1-r^2}$. Then

$$A(r) =\bigcup_{2\abs{\theta} \leq \arccos(\frac{1}{C_r})} \{\theta\}\times\left[  -\frac{1}{2}\log\left(R_{\theta,r}\right) \ , \ \frac{1}{2}\log\left(R_{\theta,r}\right)\right].$$
where $R_{\theta,r} = C_r\cos(2\theta) + \sqrt{C_r^2\cos^2(2\theta)-1}$ and we identified $\CC \sim \RR^2$.
\end{lemma}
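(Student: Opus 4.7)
\textbf{Proof plan for Lemma \ref{lem_467}.} The plan is to track the disk $D(0,r)$ through the two-step decomposition of $\arctan$ already used in the paper, namely
\[
s \;\xmapsto{\;\phi\;}\; w = \frac{1-is}{1+is} \;\xmapsto{\;\psi\;}\; z = \frac{i}{2}\log w,
\]
so that $z = \arctan s$. The map $\phi$ is a Möbius transformation sending the unit disk to the right half-plane, while $\psi$ is the principal branch of $(i/2)\log$, sending the right half-plane to the strip $|\Re z|<\pi/4$.

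First I would compute the image $\phi(D(0,r))$. The condition $|s|=r$ becomes $|1-w|=r|1+w|$, and squaring gives, after dividing by $1-r^2$,
\[
|w|^2 - 2 C_r \Re(w) + 1 = 0, \qquad C_r = \frac{1+r^2}{1-r^2},
\]
i.e.\ the circle of center $C_r$ and radius $\sqrt{C_r^2-1}$. Since $C_r>1$ this disk lies entirely in the right half-plane, and $\phi(D(0,r))$ is its interior (the open disk, by connectedness and checking $w=1$ is the image of $s=0$).

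Next I would apply $\psi$. Writing $w=\rho e^{i\varphi}$ with $\varphi\in(-\pi,\pi)$, one has $z = -\varphi/2 + i (\log\rho)/2$, so if we set $\theta = \Re(z)$ and call $v=\Im(z)$, then $\varphi=-2\theta$ and $v=(\log\rho)/2$. For a fixed $\theta$, the range of allowed $\rho>0$ is determined by
\[
(\rho\cos\varphi - C_r)^2 + (\rho\sin\varphi)^2 \leq C_r^2 - 1,
\]
which simplifies to the quadratic inequality
\[
\rho^2 - 2 C_r \cos(2\theta)\,\rho + 1 \;\leq\; 0.
\]
This has real solutions exactly when $C_r^2\cos^2(2\theta)\geq 1$, i.e.\ $2|\theta|\leq \arccos(1/C_r)$, and in that case the roots are
\[
\rho_\pm = C_r\cos(2\theta) \pm \sqrt{C_r^2\cos^2(2\theta)-1},
\]
which are reciprocals of each other since their product equals $1$. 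Setting $R_{\theta,r}=\rho_+$, the admissible $\rho$ is exactly $[1/R_{\theta,r}, R_{\theta,r}]$, and therefore $v=(\log\rho)/2$ ranges over $[-\tfrac12\log R_{\theta,r},\tfrac12\log R_{\theta,r}]$, giving the claimed description of $A(r)$.

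The main thing to watch is that the two transformations $\phi$ and $\psi$ are bijections onto their images with the chosen branch, so the fiber analysis at the level of $w$ transfers cleanly to a fiber analysis at the level of $z$; there is no topological subtlety because $C_r>1$ keeps the $w$-disk strictly inside the right half-plane, avoiding the branch cut of $\log$. The only nontrivial computation is the Möbius image calculation and solving the resulting real quadratic, both of which are direct.
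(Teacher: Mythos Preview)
Your proof is correct and follows essentially the same route as the paper: decompose $\arctan$ as a M\"obius map followed by $(i/2)\log$, identify the image of $D(0,r)$ under the M\"obius map as the disk of center $C_r$ and radius $\sqrt{C_r^2-1}$, and then intersect with rays $\rho e^{-2i\theta}$ to solve the quadratic $\rho^2 - 2C_r\cos(2\theta)\rho + 1 = 0$. The only cosmetic difference is that you obtain the circle equation by squaring $|1-w|=r|1+w|$, whereas the paper finds it via the symmetry $\overline{g(a+ib)}=g(-a+ib)$ and evaluating at $s=\pm ir$; both yield the same center and radius.
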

\begin{proof}
    The proof is rather straightforward, recall that 
    \begin{equation}\label{eq_arctandef}
        \arctan(s) = \frac{i}{2} \log \frac{1 - i s}{1 + i s}
    \end{equation}
    Let $g$ be the Mobiüs transform $g(s) = \frac{1 - i s}{1 + i s}$, we aim to determine $B(r)=g(D(0,r))$, which is a disk. It is easily checked that  $\overline{g(a+ib)} = g(-a + ib)$, for $a+ib \in D(0,1)$. Thus $B(r)$ is symmetric about the horizontal axis, and we only need to look for the minimal and maximal real values of $g$ on $D(0,r)$, which again by symmetry are attained at $\pm r\ i$ and have values 
    $$\frac{1-r}{1+r} \quad \text{and} \quad \frac{1+r}{1-r},$$
    respectively. We denote $K=\frac{1+r}{1-r}$. We have shown that $B(r)$ is the disk of center $(\frac{K+1/K}{2},0) = (C_K,0)$ and radius $R_K = \frac{K-1/K}{2}$.
    We seek to understand $I_r(\theta) = B(r) \cap \RR_+ e^{i\theta}$ for $\abs{\theta}\leq \pi/2$, thus we solve for $\lambda$ the equation
    $$(\lambda\cos(\theta) - C_K)^2 + \lambda^2\sin^2(\theta) = R_K^2$$
    which simplifies to
    $$\lambda^2 -2C_K\cos(\theta)\lambda +1 = 0$$
    where we used that $C_K^2 - R_K^2 = (C_K+R_K)(C_K-R_K) = 1.$
    
    Thus when $C_K\cos(\theta)\geq1$, we find solutions
    $$\lambda_0 = C_K\cos(\theta) - \sqrt{C_K^2\cos^2(\theta) - 1} \quad\quad \lambda_1 = C_K\cos(\theta) + \sqrt{C_K^2\cos^2(\theta) - 1}.$$
    It remains to notice that $\lambda_0\lambda_1=1$, which can be seen from the coefficients of the polynomial equation, and that  by \eqref{eq_arctandef},
    \begin{align*}
        A(r) &= \left\{\left(-\theta/2 \ ,\ \frac{1}{2}\log(r)\right), \quad re^{i\theta}\in B(r)\right\} \\&
        = \bigcup_{|\theta| \leq \frac{1}{2} \arccos (1/C_K)} \{\theta\}\times \frac{\log(I_r(2\theta))}{2}.
    \end{align*}
The lemma follows by noting that $C_K = (1+r^2) / (1 - r^2)$.
\end{proof}
We are now in position to prove Lemma \ref{lem_general}. 
\begin{proof}[Proof of Lemma \ref{lem_general}]

Remember the notations, for reals $u,x$ :
$$\Delta_f(x) = f(x+i)-f(x-i),$$ 
and
$$ K_u(x) = \frac{xe^{ux}}{\sinh(\pi x/2)}. $$
Using Proposition \ref{prop_234} and mimicking the proof of Lemma \ref{lem_less_general}, we get 
\begin{align*}
    \sum_{k=1}^\infty \Gamma_\varphi(k)\ a_k^2 &=\frac{2}{\pi} \int_0^1\int_{D(0,1-\eps)} \abs{G_f'}^2 d \lambda(z) \ \varphi(\eps) \ d\eps\\
    & = \frac{2}{\pi}\int_0^1 \int_{A(1-\eps)} \left|H_f'(z) \right|^2 \, d\lambda(z) \ \varphi(\eps) \ d\eps\\
    & = \frac{2}{\pi}\int_0^1 \int_{A(1-\eps)}\left|\int_{-\infty}^{\infty} \frac{e^{x (u + i v)}}{4\sinh(\pi x/2)}\left(f(x+i)-f(x-i)\right)  dx \right|^2 \, dudv \ \varphi(\eps) \ d\eps\\
    & = \frac{1}{8\pi}\int_0^1 \int_{A(1-\eps)}\left| \widehat{K_u\Delta_f}\right|^2 \, dudv \ \varphi(\eps) \ d\eps\\
    & \leq \frac{1}{8\pi} \int_0^1 \int_{\abs{u}\leq \frac{1}{2}\arccos(\frac{1}{C_{1-\eps}})} \int_{\abs{v}\leq \frac{1}{2}\log\left(R_{u,1-\eps}\right)}\abs{\widehat{K_u\Delta_f}}^2 \ \varphi(\eps) \ dvdud\eps \\
    & = \frac{1}{8\pi} \int_{-\pi/4}^{\pi/4} \int_{\RR} \ \abs{\widehat{K_u\Delta_f}}^2 \ \Phi\left(a(u,v)\right) dvdu\\
\end{align*}
where we used the formula \eqref{eq_1024} for $H_f'$ and Fubini in the last line where $a(u,v)$ is the greatest real such that:
\begin{align}[left={\empheqlbrace}]
&\cos(2u) \geq \frac{1}{C_{1-a(u,v)}}\label{eq:condA} \\
\bigskip
&R_{u,1-a(u,v)} \geq e^{2\abs{v}}\label{eq:condB}
\end{align}
We shall now prove that 
\begin{equation}\label{eq:abound}
    \frac{1}{2\pi}\left(\pi - 4\abs{u}\right)e^{-2\abs{v}} \leq a(u,v) \leq \left(\pi - 4\abs{u}\right)e^{-2\abs{v}}.
\end{equation}
It is easily checked that the function
$$\iota:\eps \mapsto \frac{1}{C_{1-\eps}} = \frac{1-(1-\eps)^2}{1+(1-\eps)^2} = \frac{\eps(2-\eps)}{2-2\eps + \eps^2}$$
is an increasing bijection of $[0,1]$. Furthermore, for $0<\eps<1$, it can be checked that
$$\eps \leq \iota(\eps) \leq 2\eps.$$
Thus $j=\iota^{-1}$ is increasing and satisfies
$$\eps/2 \leq j(\eps) \leq \eps.$$
From the definition of $R_{\theta,r}$ (see Lemma \ref{lem_467}) we have for any $0<r<1$ and any $\theta \in [0,2\pi]$
\begin{equation}\label{eq_536}
    C_r\cos(2\theta) \leq R_{\theta,r} \leq 2C_r\cos(2\theta).
\end{equation}
Combining \eqref{eq_536} and \eqref{eq:condB}, we get:
\begin{align*}
    &2C_{1-a(u,v)}\cos(2u) \geq e^{2\abs{v}}\\
    &\implies \frac{1}{C_{1-a(u,v)}} \leq2\cos(2u)e^{-2\abs{v}}\\
    &\implies \iota(a(u,v)) \leq2\cos(2u)e^{-2\abs{v}}\\
    &\implies a(u,v) \leq 2\cos(2u)e^{-2\abs{v}}\\
    &\implies a(u,v) \leq (\pi-4\abs{u})e^{-2\abs{v}}
\end{align*}
where we used that $j(\eps)\leq \eps$ in the third line, and lastly that $\cos(2u)=\sin(\pi/2-2\abs{u})\leq \pi/2-2\abs{u}$. 

For the lower-bound, set $$\tilde{a}(u,v)=\frac{1}{2\pi}\left(\pi - 4\abs{u}\right)e^{-2\abs{v}}.$$ We want to show that 
$$a(u,v) \geq \tilde{a}(u,v).$$
By definition of $a(_,v)$, we need to show that $\tilde{a}(u,v)$ satisfies \eqref{eq:condA} and \eqref{eq:condB}. First observe that, by concavity
$$\forall x\in[0,\pi/2] \quad\quad  \sin(x) \geq \frac{2x}{\pi}.$$
Thus for any $\abs{u}\leq \pi/4$
$$\cos(2u) = \sin(\pi/2-2\abs{u}) \geq 1-\frac{4\abs{u}}{\pi} \geq 2\tilde{a}(u,v) \geq\frac{1}{C_{1-\tilde{a}(u,v)}}$$
where in the last inequality we used that $j(\varepsilon)\geq \varepsilon/2$. Thus, \eqref{eq:condA} is satisfied. 

For \eqref{eq:condB}, using the left hand side of \eqref{eq_536} :
\begin{align*}
    \frac{1}{R_{u,1-\tilde{a}(u,v)}} &\leq \frac{1}{C_{1-\tilde{a}(u,v)}\cos(2u)}\\
    & = \frac{1}{\iota(\tilde{a}(u,v))\cos(2u)}\\
    &\leq 2\frac{\tilde{a}(u,v)}{\cos(2u)}\\
    &\leq 2\pi\frac{\tilde{a}(u,v)}{\pi-4\abs{u}}\\
    &\leq e^{-2\abs{v}}
\end{align*}
where we again used the inequality  $\cos(2u)\geq 1-\frac{4\abs{u}}{\pi}$. We have proved \eqref{eq:abound}. The lemma is proved since $\Phi$ is increasing.
\end{proof}
We will need  to compute $\Gamma_\varphi$ for certain reasonable functions $\varphi$. This is the purpose of the next lemma.
\begin{lemma}\label{lemma_gammacompute}
     Let $\varphi$ be a decreasing function then for any $k\geq1$,
     \begin{equation}\label{eq_351}
         \frac{\varphi(1/k)}{32} + \frac{k}{2}\Phi(1/2k) \, \leq \, \Gamma_\varphi(k) \, \leq \, 2k\Phi(1/k) + \varphi(1/k).
     \end{equation}
\end{lemma}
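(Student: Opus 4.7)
The plan is to prove both bounds by direct estimation of
$$\Gamma_\varphi(k) = 2k\int_0^1 (1-\eps)^{2k}\varphi(\eps)\,d\eps,$$
splitting $[0,1]$ at points of order $1/k$ and exploiting monotonicity of $\varphi$.

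For the upper bound, I would split at $\eps = 1/k$. On $[0, 1/k]$, bounding $(1-\eps)^{2k} \leq 1$ produces the first term $2k\,\Phi(1/k)$. On $[1/k, 1]$, monotonicity of $\varphi$ gives $\varphi(\eps) \leq \varphi(1/k)$, and the elementary evaluation
$$2k\int_{1/k}^1 (1-\eps)^{2k}\,d\eps = \tfrac{2k}{2k+1}(1-1/k)^{2k+1} \leq 1$$
yields a contribution of $\varphi(1/k)$, giving the bound $2k\Phi(1/k) + \varphi(1/k)$.

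For the lower bound, I would keep only the two disjoint pieces $[0, 1/(2k)]$ and $[1/(2k), 1/k]$, discarding the remainder (which is harmless since the integrand is nonnegative). On the first piece, the elementary fact that $k \mapsto (1-1/(2k))^{2k}$ is increasing with value $1/4$ at $k=1$ implies $(1-\eps)^{2k} \geq 1/4$ throughout, yielding a contribution of at least $2k\cdot(1/4)\cdot\Phi(1/(2k)) = (k/2)\Phi(1/(2k))$. On the second piece, since $\varphi$ is decreasing one has $\varphi(\eps) \geq \varphi(1/k)$, so it suffices to show
$$2k\int_{1/(2k)}^{1/k} (1-\eps)^{2k}\,d\eps \geq \tfrac{1}{32}.$$
Because the integrand is decreasing, this integral is at least $(1/(2k))\cdot(1-1/k)^{2k}$, so the left-hand side is at least $(1-1/k)^{2k}$; for $k \geq 2$ this is at least $(1/2)^4 = 1/16$, comfortably larger than $1/32$.

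The only mild obstacle is the degeneracy at $k=1$, where $(1-1/k)^{2k} = 0$ and the previous argument collapses. I would handle this case directly: on $[1/2, 1]$ a quick computation gives $2\int_{1/2}^1(1-\eps)^2\,d\eps = 1/12 \geq 1/32$, so the needed estimate still holds. Adding the two contributions gives $\Gamma_\varphi(k) \geq \varphi(1/k)/32 + (k/2)\Phi(1/(2k))$ uniformly in $k \geq 1$.
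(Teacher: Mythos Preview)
Your proof is correct and follows essentially the same approach as the paper: split the integral at points of order $1/k$ and use monotonicity of $\varphi$ together with elementary bounds on $(1-\eps)^{2k}$. The only minor difference is in the lower bound, where the paper takes the second interval to be $[1/(2k),\,3/(4k)]$ (so that $(1-3/(4k))^{2k}\geq 1/16$ holds uniformly from $k=1$), whereas you use $[1/(2k),\,1/k]$ and handle $k=1$ separately; both choices work.
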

\begin{proof}
Remember that 
$$\Gamma_\varphi(k) = 2k\int_0^1 (1-\eps)^{2k}\varphi(\eps)d\eps. $$
Proving \eqref{eq_351} is straightforward. For the upper-bound 
the right hand side follows from separating the integral on two integrals on $[0,1/k]$ and $[1/k,1]$ respectively. 

For the left-hand side, we lower bound the integral on $[0,1]$ by the integral on $[0,1/2k]$ plus the integral on $[1/2k,3k/4]$.
It remains to observe that on $[0,1/2k]$ :
$$\left(1-\frac{1}{2k}\right)^{2k} \geq \left(1-\frac{1}{2}\right)^{2} =1/4,$$
while on the second interval, $[1/2k,3k/4]$, which has length $k/4$,
$$\left(1-\frac{3}{4k}\right)^{2k} \geq \left(1-\frac{3}{4}\right)^{2} =1/16.$$
\end{proof}
Thus for instance if $0<\beta<1$, $\varphi(\eps) = \eps^{-\beta}$, for all $k\geq1$
$$\Gamma_\varphi(k) \simeq \frac{k^\beta}{1-\beta}.$$
And if $\varphi(\eps) = \log^2(\eps)$, for all $k\geq 1$,
        $$\Gamma_\varphi(k) \simeq \log^2(e+k).$$   

\section{Proof of Theorem \ref{thm:main}}
We begin with proving the upper-bound part of Theorem \ref{thm:main} when $f\in\mathcal{F}_2$. 
\subsection{Upper-bound}
According to Lemma \ref{lemma_gammacompute}, we choose $\varphi(\eps) = \log^2(\eps)$, so that $\Gamma_\varphi(k) \simeq \log^2(e+k)$ and, for any $0\leq x\leq \pi$,
$$\Phi(x) = x\left(\log^2\left(\frac{x}{e}\right)+1\right) \leq 2x\log^2\left(\frac{x}{e\pi}\right).$$
Indeed, let $0\leq x\leq \pi$ we have $\log^2\left(\frac{x}{e\pi}\right) \geq \log^2(1/e) = 1$, and thus it is enough to check that $\log^2\left(\frac{x}{e\pi}\right) - \log^2(x/e) \geq 0$, which is elementary. By Lemma \ref{lem_general},
\begin{align*}
    &\sum_{k=1}^\infty \Gamma_\varphi(k)f_k^2 \leq \frac{1}{8\pi} \int_{-\pi/4}^{\pi/4} \int_{\RR} \ \abs{\widehat{K_u\Delta_f}}^2 \ \Phi\left(\left(\pi -4\abs{u}\right)e^{-2\abs{v}}\right) dvdu \\
    &\lesssim\int_{-\pi/4}^{\pi/4}\int_{-\infty}^{\infty} \abs{\widehat{K_u\Delta_f}}^2 e^{-2\abs{v}}\left(\pi-4\abs{u}\right)\log^2\left(\frac{1}{e}\left(1-\frac{4\abs{u}}{\pi}\right)e^{-2\abs{v}}\right) \ dvdu\\
    &\leq 2\int_{-\pi/4}^{\pi/4}\int_{-\infty}^{\infty} \abs{\widehat{K_u\Delta_f}}^2 e^{-2\abs{v}}\left(\pi-4\abs{u}\right)\left[\log^2\left( \frac{1}{e}\left(1-\frac{4\abs{u}}{\pi}\right) \right) \ + 4\abs{v}^2\right]\ dvdu\\
    &\leq 8\int_{-\pi/4}^{\pi/4}\int_{-\infty}^{\infty} \abs{\widehat{\frac{d}{dx}K_u\Delta_f}\ e^{-\abs{v}}}^2 \ \left(\pi-4\abs{u}\right)\ dvdu \ + 2\int_{-\pi/4}^{\pi/4}\int_{-\infty}^{\infty} \abs{\widehat{K_u\Delta_{f}} \ e^{-\abs{v}}}^2 \ \psi(u) \ dvdu \\
    & \leq 16 \int_{-\pi/4}^{\pi/4}\int_{-\infty}^{\infty} \abs{\widehat{K_u\Delta_{f'}} \ e^{-\abs{v}}}^2 \ \left(\pi-4\abs{u}\right) \ dvdu \ + \ 16\int_{-\pi/4}^{\pi/4}\int_{-\infty}^{\infty} \abs{\widehat{K'_u\Delta_f} \ e^{-\abs{v}}}^2 \ \left(\pi-4\abs{u}\right)\ dvdu \\
    & + 2\int_{-\pi/4}^{\pi/4}\int_{-\infty}^{\infty} \abs{\widehat{K_u\Delta_{f}} \ e^{-\abs{v}}}^2 \ \psi(u) \ dvdu \\
    &= 16 T_1 \quad + \quad 16 T_2 \quad + \quad 2T_3 \\
\end{align*}
where $\psi(u) =\left(\pi-4\abs{u}\right) \log^2\left(\frac{1}{e}\left(1-\frac{4}{\pi}\abs{u}\right)\right)$. Now we remark that for a function $g \in \mathcal{F}_1$ that is real on the real line,
\begin{align*}
    \int_{\RR}\abs{\widehat{K_u\Delta_g}(v) e^{-\abs{v}}}^2dv &=\int_\RR\abs{\left(\widehat{K_ug(x+i)}(v) - \widehat{K_ug(x-i)}(v)\right) e^{-\abs{v}}}^2dv \\
    &= \int_\RR\abs{\left(\widehat{K_u(x-i)g}(v)e^v - \widehat{K_u(x+i)g}(v)e^{-v}\right) e^{-\abs{v}}}^2dv \\
    &\lesssim \int_\RR\abs{\widehat{K_u(x-i)g}(v)}^2 \ dv \ +\ \int_\RR\abs{\widehat{K_u(x+i)g}(v)}^2 \ dv \\
    &\lesssim \int_\RR\left(\abs{K_u(x+i)}^2 +\abs{K_u(x-i)}^2\right)g^2(x) \ dx \\
    &\lesssim \int_\RR \frac{e^{2xu}\left(1+x^2\right)}{\cosh(\pi x)} \ g^2(x) \ dx
\end{align*}
where we used that for any $h\in L^1$, holomorphic near a strip $\{ |\Im(z)| \leq 1 \}$ such that $h(x+i)\in L^1$
$$\widehat{x\mapsto h(x+i)}(v) = \hat{h}(v)e^v,$$
the Parseval identity, and the observation that for any $u$,
$$K_u(x+i) = \overline{K_u(x-i)} = \frac{e^{iu}e^{xu}(x+i)}{i\cosh{\pi x /2}}.$$
We thus get
$$T_1 \lesssim \int_\RR (f')^2 \ \frac{1+x^2}{\cosh(\pi x)}\left(\int_{-\pi/4}^{\pi/4} e^{2xu} \left(\pi-4\abs{u}\right) \ du\right) \ dx$$
And
$$T_3 \lesssim \int_\RR f^2 \ \frac{1+x^2}{\cosh(\pi x)}\left(\int_{-\pi/4}^{\pi/4} e^{2xu} \psi(u) \ du\right) \ dx.$$
From an entirely analogous argument and after computing $K_u'$ we also get :
$$T_2 \lesssim \int_\RR f^2 \ \frac{1+x^2}{\cosh(\pi x)}\left(\int_{-\pi/4}^{\pi/4} e^{2xu} \left(\pi-4\abs{u}\right) \ du\right) \ dx$$
and we are left with computing the integrals in $u$. Let $\alpha\in\RR$, integration by parts shows that 
$$I(\alpha) = \int_{-1}^{1} e^{\alpha t}(1-\abs{t}) \ dt = \int_{0}^{1} (e^{\alpha t}+e^{-\alpha t})(1-t) \ dt = \frac{2(\cosh(\alpha)-1)}{\alpha^2}\leq \frac{4\cosh(\alpha)}{1+\alpha^2} $$
where we used $\cosh \alpha \leq 1 + \alpha^2 $ for $\abs{\alpha}\leq 1$. 
We deduce the value of the first integral :
\begin{equation*}
    \int_{-\pi/4}^{\pi/4} e^{2xu} \left(\pi-4\abs{u}\right) \ du = \frac{\pi^2}{4}I(\pi x/2) \lesssim \frac{\cosh(\pi x/2)}{1+x^2}.
\end{equation*}
We move to computing the second integral. We change variables to the interval $[-1,1]$, writing 
\begin{align*}
    \int_{-\pi/4}^{\pi/4} e^{2xu} \psi(u) \ du & = \frac{\pi}{4}\int_{-1}^{1} e^{\frac{\pi x}{2}t}\left(\pi-\pi\abs{t}\right)\log^2\left(\frac{1-\abs{t}}{e}\right) \ dt.
\end{align*}
The function $\Psi : x\mapsto x\log^2(x/e)$ is concave on $[0,1]$, and thus by Jensen inequality
\begin{align*}
    J(\alpha) &= \int_{-1}^{1}e^{\alpha t}(1-\abs{t})\log^2\left(\frac{1-\abs{t}}{e}\right) \ dt \\
    &\leq 2 \int_{0}^1 \cosh(\alpha t) \ dt \ \Psi\left(\frac{\int_{0}^1 \cosh(\alpha t)(1-\abs{t}) \ dt}{\int_{0}^1 \cosh(\alpha t)dt}  \right) \\
    &= I(\alpha) \ \log^2\left(\frac{\alpha I(\alpha)}{ 2e \sinh(\alpha)}\right)\\
    & = \frac{2(\cosh(\alpha)-1)}{\alpha^2} \log^2\left(\frac{\cosh(\alpha)-1}{e \alpha\sinh(\alpha)}\right) \\
    &\leq \frac{4\cosh(\alpha)}{1+\alpha^2}\log^2\left(2e+2e\abs{\alpha}\right)
\end{align*}
where we used that, for any $\alpha \in \mathbb{R}$, 
\begin{equation}
\frac{1}{2(1 + |\alpha|)} \leq \frac{\cosh(\alpha) - 1}{\alpha \sinh(\alpha)} \leq \frac{1}{2}
\label{eq_1549} \end{equation}
which can be checked by direct differentiation , and, as before,
\begin{equation}
\frac{\cosh(\alpha) - 1}{\alpha} \leq \frac{2 \cosh(\alpha)}{1 + \alpha^2}.
\label{eq_1550}
\end{equation}
See the Appendix for the elementary proofs of (\ref{eq_1549}) and (\ref{eq_1550}).
Plugging this into the upper bounds of $T_1$, $T_2$, and $T_3$, with $\alpha_x = \frac{\pi x}{2}$, we get in the end
\begin{align*}
     \sum_{k=1}^\infty \log(e+k)^2 f_k^2 \ &\lesssim \ \int_\RR \left(f^2\log^2(2+\pi \abs{x})+(f')^2\right)\frac{\cosh(\pi x /2)}{\cosh(\pi x)}  \ dx\\
     & \lesssim \int_\RR f^2\log^2(e + \abs{x})\ d\nu(x) \ +   \ \int_\RR (f')^2\ d\nu(x)
\end{align*}
We proved \eqref{eq59} and it remains to establish \eqref{eq60}. Notice that the left hand side of $\eqref{eq59}$ is translation invariant, so that optimizing over translations we get :
$$\sum_{k=1}^\infty \log^2(e+k) f_k^2\ \lesssim \ \inf_{a\in\RR}\int_\RR (f-a)^2\log^2(e + \abs{x})\ d\nu(x) \ +   \ \int_\RR (f')^2\ d\nu(x)$$
and we want to prove that 
$$ \sum_{k=1}^{\infty} \log^2(e+k)f_k^2 \ \lesssim \ \int_\RR\log^2(e+\abs{x})(f')^2d\nu.$$
Thus it is enough to prove that 
\begin{equation}\label{eq_672}
    \inf_{a\in\RR}\int_\RR (f-a)^2\log^2(e + \abs{x})\ d\nu(x) \lesssim \int_\RR\log^2(e+\abs{x})(f')^2d\nu.
\end{equation}
This is an instance of a Poincaré inequality. We say that a measure $\mu$ satisfies a Poincaré inequality if for regular enough $f$,
$$\Var_\mu(f) = \inf_{a\in\RR}\int (f-a)^2 \ d\mu \leq C\int (f')^2d\mu$$
for some constant $C>0$. We denote by $C_P(\mu)$ the best such constant. 

To conclude the proof, we need a Poincaré inequality for the measure $d\tilde{\nu}(x) = \frac{1}{Z}\frac{\log^2(e+\abs{x})}{\cosh(\pi x/2)}$, where $Z$ is a normalizing factor. This is the content of the next lemma
\begin{lemma}\label{lem_poincare_perturb}
    Let $\nu = \rho(x)dx$ be a probability on $\RR$ with Poincaré constant $C_P(\nu) <\infty$, and let $\tilde{\nu}$ be the perturbated measure $\frac{d\tilde{\nu}}{d\nu}=\frac{1}{Z}\log^2(e+ \abs{x})$, then 
    $$C_P(\tilde{\nu}) \leq 4C_P(\nu)\left(1+ \frac{1}{2}\log^+\frac{4C_P(\nu)}{e}\right)^2.$$
    where for any $x>0, \ \log^+x = \max(0,\log x)$
\end{lemma}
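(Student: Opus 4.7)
The plan is to prove the lemma via a Holley--Stroock-type bounded-perturbation estimate after truncating the log-weight $W(x) := \log^2(e+\abs{x})$ at a scale $R$ to be optimized at the end.

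First, I would use affine invariance of the Poincar\'e constant to normalize $\tilde{\nu}(f) = 0$ and expand $\int f^2 W\,d\nu$ around $a := \nu(f)$. The identity $\int f W\,d\nu = 0$ gives $\int (f-a) W\,d\nu = -aZ$, and a short calculation yields
$$\Var_{\tilde{\nu}}(f) \;=\; \frac{1}{Z}\int g^2 W\,d\nu \;-\; a^2 \;\le\; \frac{1}{Z}\int g^2 W\,d\nu, \qquad g := f-a, \ \nu(g) = 0.$$
Since also $\int (f')^2 d\tilde{\nu} = \frac{1}{Z}\int (g')^2 W\,d\nu$, the problem reduces to showing $\int g^2 W\,d\nu \le C \int (g')^2 W\,d\nu$ with $C = 4\CP(\nu)(1+\tfrac12\log^+(4\CP(\nu)/e))^2$, whenever $\nu(g) = 0$.

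Second, I would split the integral at the threshold $R$. On $\{\abs{x}\le R\}$ we have $W \le \log^2(e+R)$, so the Poincar\'e inequality for $\nu$ combined with $W \ge 1$ gives
$$\int_{\abs{x}\le R} g^2 W\,d\nu \;\le\; \log^2(e+R)\,\Var_\nu(g) \;\le\; \CP(\nu)\log^2(e+R) \int (g')^2 W\,d\nu.$$
For the tail $\int_{\abs{x}>R} g^2 W\,d\nu$, I would introduce a smooth cutoff $\chi$ supported on $\{\abs{x}>R/2\}$ with $\chi\equiv 1$ on $\{\abs{x}>R\}$ and $\abs{\chi'}\lesssim 1/R$, and apply Poincar\'e for $\nu$ to the auxiliary function $h := g\chi\log(e+\abs{x})$. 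Expanding $(h')^2$ into three pieces --- the Dirichlet term $(g')^2 W\chi^2$, the cutoff term $g^2(\chi')^2 \log^2(e+\abs{x})$ localized on $\{R/2\le\abs{x}\le R\}$ where $W \le \log^2(e+R)$, and the weight-derivative term $g^2\chi^2/(e+\abs{x})^2$ supported on $\{\abs{x}>R/2\}$ where $1/(e+\abs{x})^2 \lesssim 1/R^2$ --- and bounding the non-Dirichlet pieces by invoking Poincar\'e for $\nu$ a second time yields a tail estimate of the same order $\CP(\nu)\log^2(e+R)\int (g')^2 W\,d\nu$ provided $R^2 \gtrsim \CP(\nu)\log^2(e+R)$. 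The mean $\nu(h)^2$ is controlled via $\Var_\nu(\chi\log(e+\abs{x})) \le \CP(\nu)/e^2$ since $\chi\log(e+\abs{x})$ is $O(1/e)$-Lipschitz.

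Finally, I would optimize the scale by choosing $R$ so that $\log(e+R) = 1+\tfrac12\log^+(4\CP(\nu)/e)$, which balances the inner and tail contributions and produces the stated constant with the factor $4$ absorbing the sum. The main obstacle will be the tail estimate: a naive direct application of Poincar\'e for $\nu$ to $g\log(e+\abs{x})$ without the cutoff produces the suboptimal bound $\CP(\tilde\nu) \lesssim \CP(\nu)^2/e^2$ from the $g^2/(e+\abs{x})^2$ term and the mean $\nu(g\log)^2 \le \nu(g^2)\cdot\CP(\nu)/e^2$, rather than the target $\CP(\nu)\log^2 \CP(\nu)$; the $\log^2$ improvement requires carefully coupling the cutoff scale $R$ with the truncation scale and tracking how the slow growth of the weight controls the tail contribution.
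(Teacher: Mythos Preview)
Your spatial-truncation approach is genuinely different from the paper's and would recover the correct order $C_P(\tilde\nu)\lesssim C_P(\nu)(1+\log^+ C_P(\nu))^2$, though not the stated constant $4$. Two details need fixing: the function $\chi\log(e+|x|)$ is not $O(1/e)$-Lipschitz, since the term $\chi'\log(e+|x|)$ on the transition annulus contributes $O(\log(e+R)/R)$; and with your choice of $R$ one has $(e+R)^2\sim C_P(\nu)$, so the condition $R^2\gtrsim C_P(\nu)\log^2(e+R)$ you invoke for the tail fails for large $C_P(\nu)$. The tail contribution then matches the inner one at order $C_P(\nu)\log^2(e+R)$ rather than $O(C_P(\nu))$ --- still the right size, but with constants that will not collapse to $4$.

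What the paper actually does is a sharpening of the very step you label ``naive'': it applies the Poincar\'e inequality for $\nu$ directly to $f(x)\log(e+|x|)$, obtaining
\[
\int f^2\log^2(e+|x|)\,d\nu \;\le\; 2C_P(\nu)\int (f')^2\log^2(e+|x|)\,d\nu \;+\; 2C_P(\nu)\int \frac{f^2}{(e+|x|)^2}\,d\nu,
\]
and observes that the last integrand is at most $e^{-2}f^2\log^2(e+|x|)$. Hence for $C_P(\nu)$ below a fixed threshold the second term is \emph{absorbed} into the left-hand side, giving $C_P(\tilde\nu)\le 4C_P(\nu)$ directly. For large $C_P(\nu)$ the paper rescales $\nu$ by $\lambda\sim\sqrt{C_P(\nu)}$ to land in the small-constant regime; the extra factor $(1+\tfrac12\log^+(4C_P(\nu)/e))^2$ arises from the comparison $\log^2(e+\lambda|x|)\le(1+\log\lambda)^2\log^2(e+|x|)$. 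This absorb-then-rescale maneuver is exactly what upgrades the crude $C_P(\nu)^2$ bound you identify to the sharp $C_P(\nu)\log^2 C_P(\nu)$, with no cutoffs and the clean constant $4$.
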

\begin{proof}
Let $f\in L^2(\tilde{\nu})$ such that $f'\in L^2(\tilde{\nu})$, let $g(x) = f(x)\log(e+\abs{x})$ and assume that $\int g d{\nu} =0$. By the Poincaré inequality for $\nu$ :

\begin{align*}
    \Var_{\nu}(g) &= \int_{\RR}f^2\log^2(e+\abs{x}) d\nu \\ &\leq \ C_P(\nu) \int_{\RR} \left(f'\log(e+\abs{x}) + \frac{f}{e+\abs{x}})\right)^2\ \ d\nu \\
    &\leq 2ZC_P(\nu) \ \int_{\RR} (f')^2 d\tilde{\nu} \ + \ 2C_P(\nu)\int_\RR \frac{f^2}{(e+\abs{x})^2} d \nu
\end{align*}
So that,
\begin{equation}\label{eq_435}
    \frac{1}{Z}\int_{\RR} f^2(\log^2(e+\abs{x})-\frac{2C_P(\nu)}{(e+\abs{x})^2})d\nu \leq 2C_P(\nu) \ \int_{\RR} (f')^2 d\tilde{\nu}. 
\end{equation}
Now if $C_P(\nu) \leq \frac{e}{4}$, we immediately get 
$$C_P(\tilde{\nu}) \leq 4C_P(\nu).$$
On the other hand, if $C_P(\nu)>\frac{e}{4}$, we set $\lambda = 2\sqrt{\frac{C_P(\nu)}{e}}>1$, and we denote by $\nu_\lambda$ and $\tilde{\nu}_\lambda$ the dilated measures defined by
$$\nu_\lambda(A) = \nu(\lambda A) \ ; \quad \tilde{\nu}_\lambda(A) = \tilde{\nu}(\lambda A)$$
for all measurables sets $A$. We have, by scaling of the Poincaré constant, $C_P(\nu_\lambda) = C_P(\nu)/\lambda^2 = e/4$. Thus by what precedes, for any regular enough $f$ with
$$\int_\RR f\log^2(e+\abs{x})\nu_\lambda(dx) = 0 $$
we have
\begin{equation}
    \int_\RR f^2\log^2(e+\abs{x})\nu_\lambda(dx) \leq e \int_\RR (f')^2\log^2(e+\abs{x})\nu_\lambda(dx).
\end{equation}
Since $\lambda>1$, we remark that for any $x>0$
$$\log^2(e+\lambda x) \ \leq \ \left(\log(e+\abs{x}) + \log \lambda\right)^2 \ \leq \ (1+\log(\lambda))^2\log^2(e+\abs{x}).$$
Notice that, precisely, for all $x\in\RR$
$$\frac{d\tilde{\nu}_\lambda}{d\nu_\lambda}(x) = \frac{d\tilde{\nu}}{d\nu}(\lambda x) = \frac{\log^2(e+\lambda\abs{x})}{Z}.$$
We now prove a Poincaré inequality for $\tilde{\nu}_\lambda$. Without loss of generality we assume that $$\int_\RR f\log^2(e+\abs{x})\nu_\lambda(dx) = 0$$
and we compute :
\begin{align*}
    \Var_{\tilde{\nu}_\lambda}(f) &\leq \int_\RR f^2 \ d\tilde{\nu}_{\lambda}(x) \\
    & = \frac{1}{Z} \int_\RR f^2\ \log^2(e+\lambda\abs{x}) d\nu_\lambda(x) \\
    & \leq \frac{(1+\log \lambda)^2}{Z}\int_\RR f^2\ \log^2(e+\abs{x}) d\nu_\lambda(x) \\
    &\leq \frac{e(1+\log \lambda)^2}{Z} \int_\RR (f')^2\ \log^2(e+\abs{x}) d\nu_\lambda(x) \\
    & = e(1+\log \lambda)^2 \int_\RR (f')^2 \ d\tilde{\nu}_\lambda(x)
\end{align*}
Thus
$$C_P(\tilde{\nu}_\lambda) \leq e(1+\log \lambda)^2.$$
Finally
$$C_P(\tilde{\nu}) = \lambda^2C_P(\tilde{\nu}_\lambda) \leq 4C_P(\nu)\left(1+ \frac{1}{2}\log\frac{4C_P(\nu)}{e}\right)^2.$$
\end{proof}

We proved Theorem \ref{thm:main} when $f\in\mathcal{F}_2$, it remains to extend it to all square integrable functions. Let $f\in L^2(\nu)$ such that $\int (\log^2(e+\abs{x})(f')^2 d\nu < \infty$. By density of the polynomials in $L^2(\tilde{\nu})$, which admits exponential moments, there exists some polynomial $Q$ such that 
$$\int_\RR (f'-Q')^2\log^2(e+\abs{x})d\nu \leq \eps$$
Thus, by adding an appropriate constant to $Q$ and using the Poincar\'e inequality (Lemma \ref{lem_poincare_perturb}), we also have
$$\int_\RR (f-Q)^2\log^2(e+\abs{x})d\nu \leq C\eps$$
for some constant $C>0$.

We define 
$$T :  \RR[X] \mapsto  L^2(\nu)$$
by 
$$T(Q) = \sum_{k\geq 1}\log(e+k)\langle Q,P_k\rangle_{L^2(\nu)} P_k$$
where $P_k$ are the MP polynomials and $\RR[X]$ is equipped with the Hilbert norm
$$\norm{Q}^2 = \int Q^2\log^2(e+\abs{x}) \ d\nu \ + \ \int (Q')^2\ d\nu.$$
$T$ is uniformly bounded on $\RR[X]$ which is a dense subset of $L^2(\nu)$. Thus $T$ extends to a bounded operator on this Hilbert space, with the same norm, which is what we needed.

\subsection{Tightness of the bound.}\label{subsec_tightness}
In this section we prove the tightness of Theorem \ref{thm:main}. The computations at the end of Section \ref{section_preliminaries} suggest that already gaussians  provide a lower bound.

Set $F(x) = e^{-x^2/2}$ and as before, for any $\lambda \geq 1$, 
$$F_\lambda(x) = \frac{1}{\sqrt{\lambda}}F(\lambda x)$$
and remark that 
$$\int_\RR \abs{F'_\lambda(x)}^2\log^2{(e+\abs{x})}e^{-\abs{x}} \ dx \leq\int \abs{F'(\lambda x)}^2 \lambda \ dx = \sqrt{2\pi} $$
where we used that $\log^2(e+\abs{x})e^{-x} \leq 1$ for all non-negative $x$. Furthermore $F_\lambda \in \cF_2$ for all $\lambda\geq1$ as it is holomorphic and bounded on any horizontal strip.

Now let $(a_k)_{k\geq1}$ be a positive sequence with $\lim_{k\to\infty}a_k = +\infty$. We will prove that
\begin{equation}\label{eq_462}
    \sum_{k\geq 1} a_k \log^2(e+k)(F_\lambda)_k^2  \quad \underset{\lambda \to \infty}{\longrightarrow} \quad +\infty.
\end{equation}
Up to adding a constant if necessary, we assume that for all $k\in\NN$
$$a_k\geq 1.$$
We begin with a technical lemma
\begin{lemma}\label{lem_define_tau}
    There exists some positive convex function $\tau$ such that 
    \begin{itemize}
        \item For any $k\geq 1 \quad a_k\log^2(e+k) \geq \tau(\log^2(e+k))$
        \item $\lim_{x\to\infty} \frac{\tau(x)}{x} = +\infty$
        \item For any $x>16$, $\frac{\tau'(x)}{\tau(x)} \leq \frac{1}{4\sqrt{x}}$
        \item For any $x>0$, $\tau(x)\leq 1+x^2$
    \end{itemize}
\end{lemma}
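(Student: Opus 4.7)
My plan is to construct $\tau$ explicitly from a suitably slow-growing, monotone surrogate of the sequence $(a_k)$. Since the only hypothesis on $(a_k)$ is that $a_k\to\infty$ with $a_k\geq 1$, I first pass to the monotone minorant $\tilde a_k:=\min_{j\geq k}a_j$, which is non-decreasing, still tends to infinity, and satisfies $1\leq\tilde a_k\leq a_k$. I then truncate further: $\hat a_k:=\min(\tilde a_k,\log\log(100+k))$ is non-decreasing, tends to infinity, satisfies $1\leq \hat a_k\leq a_k$, and grows no faster than $\log\log k$. Property (1) of the lemma only needs to be established with $\hat a_k$ in place of $a_k$.

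Setting $y_k:=\log^2(e+k)$, I take $\tau(y):=y\,\sigma(y)$ for a smooth, non-decreasing, concave function $\sigma:[0,\infty)\to[1,\infty)$ with the following features: (a) $\sigma\equiv 1$ on an initial interval $[0,y_0]$ where $y_0>16$ is chosen large enough (for instance $y_0=100$); (b) $\sigma(y_k)\leq\hat a_k$ for every $k\geq 1$; (c) $\sigma(y)\to\infty$ as $y\to\infty$; (d) $\sigma'(y)/\sigma(y)\leq 1/(4\sqrt y)-1/y$ for $y>y_0$. Such a $\sigma$ can be built by interpolating the data $\{(y_k,\hat a_k)\}_{k\geq 1}$ by its concave upper envelope on $[y_0,\infty)$, glued smoothly to the constant $1$ on $[0,y_0]$ and scaled (if needed) so that (d) holds from $y_0$ onward. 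Because $\hat a_k=O(\log\log k)=O(\log y_k)$, the resulting $\sigma$ grows at most logarithmically, and the conditions (b) and (d) are compatible for $y_0$ sufficiently large.

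With this $\tau$, the four conditions of the lemma are direct. Property (1) follows from $\tau(y_k)=y_k\sigma(y_k)\leq y_k\hat a_k\leq a_k\log^2(e+k)$. Property (2) follows from $\tau(y)/y=\sigma(y)\to\infty$. For property (3), on $(16,y_0]$ we have $\tau'/\tau=1/y\leq 1/(4\sqrt y)$ since $y\geq 16$ forces $y\geq 4\sqrt y$, while on $(y_0,\infty)$ we get $\tau'/\tau=1/y+\sigma'/\sigma\leq 1/(4\sqrt y)$ by (d). Property (4) follows from $\tau(y)\leq Cy\log(2+y)\leq 1+y^2$ for $y\geq 0$ (with $C$ absorbed into the choice of $\sigma$). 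Finally, convexity of $\tau$ reduces to $\tau''=2\sigma'+y\sigma''\geq 0$, which holds for canonical slow-growing concave $\sigma$; e.g.\ for $\sigma(y)=1+c\log(y/y_0)$ past $y_0$ one computes $\tau''=c/y>0$.

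The delicate point is matching condition (3) just past the threshold $y=16$: since the right-hand side $1/(4\sqrt y)-1/y$ vanishes at $y=16$, $\sigma$ cannot strictly increase immediately past $16$. This is resolved by keeping $\sigma$ constant on an initial segment $[0,y_0]$ stretching strictly past $16$, and only then allowing $\sigma$ to begin its slow ascent; since the limiting behavior of $(a_k)$ is all that matters for the required growth of $\tau$, this constant segment is harmless, and the remaining freedom in shaping $\sigma$ on $[y_0,\infty)$ is ample to enforce all the other conditions simultaneously.
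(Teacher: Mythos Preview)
Your overall strategy---writing $\tau(y)=y\,\sigma(y)$ for a slowly growing concave $\sigma$---is reasonable and different from the paper's piecewise-affine construction, but as written the argument has genuine gaps.

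First, the construction of $\sigma$ is not actually carried out. You say to take ``the concave upper envelope'' of the data $\{(y_k,\hat a_k)\}$; but the concave upper envelope lies \emph{above} the data, which gives $\sigma(y_k)\geq\hat a_k$, the wrong inequality. You need a concave \emph{minorant}. Moreover, even the largest concave minorant of a non-decreasing divergent sequence need not diverge and need not satisfy your derivative bound (d), so you cannot simply appeal to the envelope.

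Second, the remark ``scaled (if needed) so that (d) holds'' is incorrect: the quantity $\sigma'/\sigma$ is invariant under scaling $\sigma\mapsto c\sigma$, so no scaling can help with (d). Condition (d) is a genuine constraint on the \emph{shape} of $\sigma$, not its size, and has to be built into the construction from the start.

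Third, you only verify convexity of $\tau$ for the particular choice $\sigma(y)=1+c\log(y/y_0)$, not for the envelope construction you actually propose. For a general concave $\sigma$ the sign of $\tau''=2\sigma'+y\sigma''$ is not automatic. Similarly, the bound $\tau(y)\leq 1+y^2$ is dismissed with ``$C$ absorbed into the choice of $\sigma$'', but you have already used up your freedom in $\sigma$ to match the data and condition (d); it is not clear you can also force $\sigma(y)\leq (1+y^2)/y$ for all $y$. And the suggested $y_0=100$ cannot be fixed in advance: if $\hat a_k=1$ for all $k\leq N$ with $N$ enormous, then $\sigma$ must stay equal to $1$ on $[0,y_N]$.

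The paper avoids all of this by building $\tau$ directly as a piecewise-affine convex function with slopes $1,2,3,\ldots$ on successive intervals $[x_i,x_{i+1})$, and choosing the breakpoints $x_i$ large enough (specifically $x_{i+1}\geq\log^2(e+k_i)$ where $k_i=\inf\{k:a_k\geq i\}$, and $x_i\geq 16i^2$). Each of the four properties is then a short direct check. Your ansatz $\tau=y\sigma$ could be made to work, but you would need to specify $\sigma$ concretely (e.g.\ piecewise of the form $1+c_i\log(y/y_i)$ with carefully chosen $c_i,y_i$) and verify all four properties for that specific choice, rather than invoking an envelope.
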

\begin{proof}
Let 
$$0=x_1<x_2<\ldots<x_i<\ldots$$
be an increasing sequence such that 
$$\lim_{i\to\infty}x_i = +\infty$$
to be determined precisely later. We define $\tau$ to be affine on each $(x_i,x_{i+1})$ for $i\geq1$. More precisely, we set 
$$\tau(x_1)=\tau(0)=0$$
and we define, inductively, for any $k\geq1$
\begin{equation}
    \tau(x) = i(x-x_i) + \tau(x_i) \qquad \text{if }x\in[x_i,x_{i+1}[
\end{equation}
By definition, $\tau$ is positive, and $\tau'$ is increasing, and diverging. Thus $\tau$ is convex and 
$$\lim_{x\to\infty} \frac{\tau(x)}{x} = +\infty.$$
Now we want to ensure that any $k\geq 1$,  $$a_k\log^2(e+k) \geq \tau(\log^2(e+k))$$
For all integers $i\geq1$ we set 
$$k_i=\inf\{k\in\NN, \ a_k\geq i\}$$
which is finite since $a_k$ diverge to infinity. Notice also that $k_1 = 0$. For any $i\geq 1$, we choose $x_{i+1}$ such that 
\begin{equation}\label{eq810}
    x_{i+1} \geq \log^2(e+k_i).    
\end{equation}
Then, for any $k\geq1$, there exists $i\geq1$ such that 
$$k_i\leq k \leq k{i+1}$$
Thus, by the definition of $k_i$,
$$a_k\log^2(e+k) \geq i\log^2(e+k).$$
On the other hand, by \eqref{eq810}
$$\log^2(e+k)\leq \log^2(e+k_i)\leq x_{i+1}.$$
Now, notice that for any $x\leq x_{i+1}$,
$$\tau(x) = \int_{0}^{x}\tau'(t)\ dt \leq ix$$
Thus
$$\tau(\log^2(e+k))\leq i\log^2(e+k)\leq a_k\log^2(e+k)$$
which is what we wanted. It remains to ensure the growth conditions on $\tau'$ and $\tau$.

Let $x>0$ and let $i\geq1$ such that $x\in(x_i,x_{i+1})$. We want to ensure that
$$\frac{\tau'(x)}{\tau(x)} = \frac{i}{i(x-x_i)+\tau(x_i)}\leq \frac{1}{4\sqrt{x}}$$
or equivalently,
$$i(x-x_i) +\tau(x_i) \geq 4i\sqrt{x}.$$
First we assume that $i\geq 2$. Clearly, it is enough to the inequality at at $x=x_i$. This amounts to 
$$\tau(x_i) \geq 4i\sqrt{x_i}.$$
But we always have $\tau(x)\geq x$, thus it is enough to assume that for all $i\geq2$
\begin{equation}\label{eq832}
    x_i\geq 16i^2.
\end{equation}
When $i=1$, the inequality becomes 
$$\frac{1}{x} \leq \frac{1}{4\sqrt{x}}$$
which is valid for $x\geq16$. We have verified that for all $x\geq \min(x_2,16)$
$$\frac{\tau'}{\tau} \leq \frac{1}{4\sqrt{x}}$$
Finally, for the growth condition on $\tau$, by what precedes we chose $x_2\geq64\geq1$. In particular, 
$$\tau(1)=1.$$
Let $x\geq 1$ and let $i$ be such that $x_i\leq x \leq x_{i+1}$. If $i=1$,
$$\tau'(x) = 1 \leq x.$$
If $i\geq2$, using \eqref{eq832},
$$\tau'(x) = i \leq x_i \leq x.$$
Thus,
$$\tau(x) \leq 1 + \int_1^x t \ dt \leq 1 + t^2$$
\end{proof}
We now set $\varphi(\eps) = \tau\left(\log^2 \eps\right)$. We shall only use a weak consequence of \eqref{eq_260} :  
\begin{align}
        \sum \Gamma_\varphi(k) (F_\lambda)_k^2 &\gtrsim \int_{-\pi/8}^{\pi/8} \int_{v\in\RR}\abs{\widehat{K_u\Delta_{F_\lambda}}}^2 \ \Phi\left(\frac{1}{4}e^{-2\abs{v}}\right)\nonumber\\ 
        &\gtrsim \int_{-\pi/8}^{\pi/8} \int_{v\in\RR}\abs{\widehat{K_u\Delta_{F_\lambda}}}^2 \ \Phi\left(e^{-2\abs{v}}\right)\label{eq_489}.
\end{align}
Where used that $\Phi(x/4) \geq \Phi(x)/4$, since $\varphi$ is decreasing. We start by evaluating $\Phi$ and $\Gamma_\varphi$
\begin{lemma}\label{lem_491} For any $0\leq x\leq e^{-4}$ we have 
$$x\varphi(x) \ \leq \ \Phi(x) \ \leq \ 2x\varphi(x)$$    
\end{lemma}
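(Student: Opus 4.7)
\textbf{Proof plan for Lemma \ref{lem_491}.} The lower bound is immediate from monotonicity. Since $\tau$ is increasing and $u \mapsto \log^2 u$ is decreasing on $(0,1)$, the function $\varphi(u) = \tau(\log^2 u)$ is decreasing on $(0,1)$. Therefore $\varphi(u) \geq \varphi(x)$ for all $u \in (0,x)$, and integrating gives
$$\Phi(x) = \int_0^x \varphi(u)\,du \geq x\,\varphi(x).$$

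For the upper bound, the strategy is to extract from the third property of Lemma \ref{lem_define_tau} a logarithmic-derivative estimate on $\varphi$, which then translates into a polynomial comparison between $\varphi(u)$ and $\varphi(x)$. Differentiating,
$$\frac{\varphi'(u)}{\varphi(u)} = \frac{2\log u}{u} \cdot \frac{\tau'(\log^2 u)}{\tau(\log^2 u)}.$$
For $0 < u \leq e^{-4}$ we have $\log^2 u \geq 16$, so the bound $\tau'(y)/\tau(y) \leq 1/(4\sqrt{y})$ applies with $y = \log^2 u$, yielding $\tau'(\log^2 u)/\tau(\log^2 u) \leq 1/(4|\log u|)$. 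Since $\log u < 0$, combining these gives $\varphi'(u)/\varphi(u) \geq -1/(2u)$ on $(0, e^{-4}]$.

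Integrating this differential inequality from $u$ to $x$ (with both in $(0,e^{-4}]$) produces $\log \varphi(x) - \log \varphi(u) \geq \tfrac{1}{2} \log(u/x)$, i.e.\ $\varphi(u) \leq \varphi(x)\sqrt{x/u}$. Plugging back in,
$$\Phi(x) = \int_0^x \varphi(u)\,du \leq \varphi(x)\sqrt{x} \int_0^x \frac{du}{\sqrt{u}} = 2x\,\varphi(x),$$
which is the desired upper bound. The only nontrivial step is the derivation of the log-derivative estimate; everything else is mechanical, so there is no real obstacle here, the inequality having been built into $\tau$ precisely to make this type of comparison work.
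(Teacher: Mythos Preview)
Your proof is correct and rests on the same core estimate as the paper's: the inequality $\varphi'/\varphi \ge -1/(2u)$ on $(0,e^{-4}]$ is algebraically equivalent to the paper's $(x\varphi)' \ge \tfrac12\Phi'$, and both come directly from the bound $\tau'/\tau \le 1/(4\sqrt{\cdot})$. The only difference is packaging---the paper compares $(x\varphi)'$ with $\Phi'$ and integrates once, while you first integrate the log-derivative to the pointwise bound $\varphi(u)\le\varphi(x)\sqrt{x/u}$ and then integrate again; your lower bound via monotonicity of $\varphi$ is likewise the paper's $\Psi'\le\Phi'$ in plainer form.
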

\begin{proof}
    Set $\Psi(x) = x\varphi(x) = x\tau\log^2(x)$. Remember that $\tau(x)\leq 1+x^2$ thus $$\Psi(0) = 0 = \Phi(0).$$ We compute 
    $$\Psi'(x) = \log^2x + 2\log(x) \tau'(\log^2 x).$$
    Since $\tau'\geq 0$, for any $0\leq x\leq 1$,
    $\Psi'(x) \leq \Phi(x).$
    On the other hand, if $x\leq e^{-4}$, then 
    $$\log^2(x)\geq 16,$$
    thus
    $$\tau'(\log^2(x)) \leq \frac{\tau(x)}{4\abs{\log(x)}}.$$
    We get that on $[0,e^{-4}]$
    $\Psi'(x) \geq \log^2(x) -\frac{1}{2}\log^2(x) \geq \frac{1}{2}\Phi'.$
    Integrating the inequalities yields the desired result.
\end{proof}
\begin{lemma}\label{lem_505} There exists a universal constant $C>0$ such that for any $k\geq1$,
$$\Gamma_\varphi(k) \leq C \tau(\log^2(e+k))$$
\end{lemma}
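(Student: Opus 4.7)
The plan is to feed $\varphi(\eps) = \tau(\log^2 \eps)$ into the upper bound of Lemma \ref{lemma_gammacompute}. To do so, I first need to check that $\varphi$ is decreasing on $(0,1)$: since $\log \eps < 0$ there, the function $\eps \mapsto \log^2 \eps$ is decreasing, and composing with the increasing function $\tau$ (from Lemma \ref{lem_define_tau}) preserves monotonicity. Lemma \ref{lemma_gammacompute} then gives
\[
\Gamma_\varphi(k) \ \leq \ 2k\,\Phi(1/k) \ + \ \varphi(1/k).
\]

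For the $\varphi(1/k)$ term, the inequality $\log^2 k \leq \log^2(e+k)$ and monotonicity of $\tau$ yield $\varphi(1/k) = \tau(\log^2 k) \leq \tau(\log^2(e+k))$, which is already of the desired form. For the main term $2k\Phi(1/k)$, I would restrict attention first to $k \geq e^4$, so that $1/k \leq e^{-4}$ and Lemma \ref{lem_491} applies; it gives
\[
\Phi(1/k) \ \leq \ 2\cdot(1/k) \cdot \varphi(1/k) \ = \ \frac{2}{k}\,\tau(\log^2 k),
\]
hence $2k\Phi(1/k) \leq 4\,\tau(\log^2(e+k))$. Summing, $\Gamma_\varphi(k) \leq 5\,\tau(\log^2(e+k))$ for $k \geq e^4$.

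The remaining finite range $1 \leq k < e^4$ is handled by a crude estimate. Using the growth bound $\tau(x) \leq 1+x^2$ from Lemma \ref{lem_define_tau}, the definition of $\Gamma_\varphi$ gives
\[
\Gamma_\varphi(k) \ \leq \ 2k \int_0^1 (1+\log^4 \eps)\, d\eps \ \lesssim \ k \ \lesssim \ e^4,
\]
a universal constant. Since $\log(e+k) \geq 1$ for $k \geq 0$, monotonicity of $\tau$ and $\tau(1)=1$ (read off from the construction in Lemma \ref{lem_define_tau}, in which $\tau$ has slope $1$ on $[0,x_2)$ with $x_2 \geq 1$) give $\tau(\log^2(e+k)) \geq 1$, so the desired inequality also holds on this finite range, at the cost of a larger absolute constant. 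Taking $C = \max(5, 50\, e^4)$ finishes the proof.

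There is no real obstacle here: the lemma is a mechanical consequence of the general upper bound in Lemma \ref{lemma_gammacompute} together with the integrated comparison $\Phi(x) \lesssim x\varphi(x)$ established in Lemma \ref{lem_491}. The only point requiring a small verification is monotonicity of the composition $\tau \circ \log^2$ on $(0,1)$ and the elementary handling of the finite initial range.
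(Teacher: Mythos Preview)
Your proof is correct and follows essentially the same approach as the paper: apply the upper bound of Lemma~\ref{lemma_gammacompute}, use Lemma~\ref{lem_491} for $k\geq e^4$, and handle the finite range $1\leq k<e^4$ by a crude estimate. The only cosmetic difference is that for small $k$ the paper bounds $\varphi(1/k)+2k\Phi(1)$ (exploiting that $\tau$ is the identity on $[0,16]$ and that $\Phi(1)\leq\int_0^1(1+\log^4\eps)\,d\eps$ is a universal constant), whereas you bound the defining integral of $\Gamma_\varphi$ directly via $\tau(x)\leq 1+x^2$; both routes give the same conclusion.
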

\begin{proof}
Remember that from Lemma \ref{lemma_gammacompute}: 
$$\Gamma_\varphi(k) \leq \varphi(1/k) + 2k\Phi(1/k) $$
Now, if $k\geq e^{4}$, by the previous Lemma,
$$\Phi(1/k) \leq 4\varphi(1/k)$$
thus
$$\Gamma_\varphi(k) \leq 5\varphi(1/k) \leq 5\log^2(e+k)$$
On the other hand, if $k\leq e^{4}$,
\begin{align*}
    \Gamma_\varphi(k) &\leq \varphi(1/k) + 2k\Phi(1/k) \\
    &\leq \log^2(k) + 2k\Phi(1)\\
    &\leq \log^2(k) + 2e^{4}\Phi(1)\\
    &\leq (2e^{4}\Phi(1)+1)\log^2(e+k).
\end{align*}
The lemma is proved with $C=\max(2e^{4}\Phi(1)+1, \ 5).$
\end{proof}
Using the left hand side of Lemma \ref{lem_491} we can lower bound the right hand side of \eqref{eq_489}.
\begin{align}
     \int_{-\pi/8}^{\pi/8} \int_{v\in\RR}\abs{\widehat{K_u\Delta_{F_\lambda}}}^2 \ \Phi\left(e^{-2\abs{v}}\right) \ du dv &\geq \int_{-\pi/8}^{\pi/8} \int_{v\in\RR} \abs{\widehat{K_u\Delta_{F_\lambda}}}^2 e^{-2\abs{v}}\tau(\log^2(e^{-2\abs{v}})) \ du dv \nonumber \\
    &\geq\int_{v\in\RR} \abs{\widehat{K\Delta_{F_\lambda}}}^2 e^{-2\abs{v}}\tau(v^2) \ dv\label{eq_512}
\end{align}
where in the last line we used the Jensen inequality and $K$ is defined, for any $x\in\RR$, as 
$$ K(x) = \int_{-\pi/8}^{\pi/8}K_u(x) = \frac{2\sinh(\pi x/8)}{\sinh(\pi x/2)}.$$
Now,
$$\widehat{K\Delta_{F_\lambda}} = \widehat{K}*\widehat{\Delta F_\lambda}$$
Furthermore, for any $v\in\RR$
\begin{equation}\label{eq_516}
    \widehat{\Delta F_\lambda}(v) = \widehat{F_\lambda}(v)\sinh(v) = \frac{1}{\lambda^{3/2}}\hat{F}(v/\lambda)\sinh(v) = \frac{\sqrt{2\pi}}{\lambda^{3/2}}e^{-v^2/2\lambda^2}\sinh(v).
\end{equation}
A standard computation, which we detail in the appendix, shows that
\begin{equation}\label{eq_678}
    \hat{K}(v) = \frac{4}{1+\sqrt{2}\cosh(2v)}.
\end{equation}
We see that $\hat{K}(v)$ is a nice positive kernel which allows us to lower bound the convolution 
$$\widehat{K\Delta_{F_\lambda}} = \widehat{K}*\widehat{\Delta F_\lambda}.$$
Let $\lambda\geq1$. For any $v\geq 2$, 
\begin{align*}
\frac{\lambda^{3/2}}{4\sqrt{2\pi}} \hat{K} & * \widehat{\Delta F_{\lambda}} (v)    = \int_\RR e^{-t^2/2\lambda^2}\sinh(t)\frac{1}{1+\sqrt{2}\cosh(2(v-t))} \ dt\\
    &=\int_{\RR^+} e^{-t^2/2\lambda^2}\sinh(t)\frac{1}{1+\sqrt{2}\cosh(2(v-t))} \ dt + \int_{\RR^-} e^{-t^2/2\lambda^2}\sinh(t)\frac{1}{1+\sqrt{2}\cosh(2(v-t))} \ dt \\
    & = \int_{\RR^+} e^{-t^2/2\lambda^2}\sinh(t)\left(\frac{1}{1+\sqrt{2}\cosh(2(v-t))} - \frac{1}{1+\sqrt{2}\cosh(v+t)}\right) \ dt \\
    & \gtrsim\int_{\RR^+} e^{-t^2/2\lambda^2}\sinh(t)\frac{1}{1+\sqrt{2}\cosh(2(v-t))} \ dt \\
    &\geq\int_{v-1}^{v}e^{-t^2/2\lambda^2}\sinh(t)\frac{1}{1+\sqrt{2}\cosh(2(v-t))} \ dt \\
    &\gtrsim \sinh(v-1)e^{-v^2/2\lambda^2}\\
    &\geq e^{-v^2/2\lambda^2}e^{v}
\end{align*}
where we used that, for any $v\geq 2$ and $t\geq0$
\begin{align*}
    \frac{1+\sqrt{2}\cosh(2(v+t))}{1+\sqrt{2}\cosh(2(v-t))} \geq 1+c_0>1
\end{align*}
 and in the last line that $\sinh(v-1)\geq e^{v}/4$ for $v\geq 2$. By symmetry, we get that for all $\abs{v}\geq 2$,
$$\abs{\widehat{K_u\Delta F_\lambda}(v)} \geq \frac{c}{\lambda^{3/2}} e^{-v^2/2\lambda^2}e^{\abs{v}}$$
for some constant $c>0.$ Plugging this into \eqref{eq_489} and using \eqref{eq_512} as well as the estimate on $\Gamma_\varphi$ from Lemma \ref{lem_505}, we get 
\begin{align*}
    \sum_{k\geq 1}a_k\log^2(e+k)(F_\lambda)_k^2 &\geq \sum_{k\geq 1}\tau\left(\log^2(e+k)\right)(F_\lambda)_k^2 \\
    & \geq \frac{1}{C} \sum_{k=2}^{+\infty} \Gamma_\varphi(k)(F_\lambda)_k^2 \\
    & \gtrsim \int_{-\pi/8}^{\pi/8} \int_\RR \abs{\widehat{K_u\Delta_{F_\lambda}}}^2 e^{-2\abs{v}}\tau(v^2) \ dudv\\
    & \gtrsim \frac{1}{\lambda^3}\int_{\abs{v}\,\geq\, 2} e^{-v^2/\lambda^2}\tau(v^2)  \ dv \\
    & = \frac{1}{\lambda^2}\int_{\abs{w}\geq \frac{2}{\lambda}} e^{-w^2}\tau(\lambda^2w^2) \ dw\\
    &\geq \frac{Z}{\lambda^2}\int_{\abs{w}\geq 2} \frac{e^{-w^2}}{Z}\tau(\lambda^2w^2)\ dw\\
    &\geq \frac{Z}{\lambda^2} \tau\left(\lambda^2\int_{\abs{w}>2}w^2\frac{e^{-w^2}}{Z} \ dw\right) \\
    &\gtrsim \frac{\tau(C_1\lambda^2)}{\lambda^2} \underset{\lambda \to +\infty}{\longrightarrow} \quad +\infty,
\end{align*}
where $Z = \int_{\abs{w}\geq 2}e^{-w^2} >0$ and $C_1 = \frac{1}{Z}\int_{\abs{w}\geq 2}w^2e^{-w^2}>0 $ are two positive constants and we used Jensen inequality, as $\tau$ is convex.

Thus \eqref{eq_462} is proved. That is we showed that there exists a family of functions $F_\lambda$ such that 
$$\frac{1}{\sqrt{2\pi}}\int (F'_\lambda)^2(x) \ dx \leq 1, $$
but
\begin{equation*}
    \sum_{k\geq 1} a_k \log^2(e+k)(F_\lambda)_k^2  \quad \underset{\lambda \to \infty}{\longrightarrow} \quad +\infty.
\end{equation*}
By the Banach-Steinhaus theorem, there exists a function $F_\infty$ with
$$\int (F'_\infty)^2(x) \ dx \leq 1,$$
but
$$\sum_{k\geq 1} a_k \log^2(e+k)(F_\infty)_k^2 = +\infty.$$
It is a bit stronger than what was announced in Theorem \ref{thm:main}.

\appendix

\section{Appendix: Inequalities related to hyperbolic functions}
\subsection{Inequality 1}
We prove that
$$\frac{\cosh(x)-1}{x^2} \leq \frac{2\cosh(x)}{1+x^2}.$$
We work with $x\geq0$. If $x\geq 1$ there is nothing to prove as $1+x^2\leq 2x^2$. For $x\leq1$, we claim that
$$\frac{\cosh(x)-1}{x^2} \leq \frac{\cosh(x)}{1+x^2}.$$
Indeed, this is equivalent to 
$$\cosh(x) \leq 1+x^2,$$
which is true when $x\leq1$. Both functions are equal in $0$ and the derivative of the difference  satisfy :
$$\sinh(x)-2x \leq \sinh(1)x-2x \leq 0.$$
\subsection{Inequality 2}
We prove that 
\[
\frac{1}{2(1 + |x|)} \leq \frac{\cosh(x) - 1}{x \sinh(x)} \leq 1/2.
\]
Let $$F(x) = \frac{\cosh(x) - 1}{x\sinh(x)}.$$
Then for $x\geq0$,
\begin{align*}
    x\sinh^2(x)F'(x) &= x\sinh^2(x) - \sinh(x)\cosh(x)+\sinh(x) -x\cosh^2(x) + x\cosh(x)\\
    &= \sinh(x)+x\cosh(x)-\sinh(x)\cosh(x)-x\\
    &=x(\cosh(x)-1)-\sinh(x)(\cosh(x)-1)\leq0.
\end{align*}
Thus
$$F(x) \leq F(0) = \frac{1}{2}.$$
We prove the lower bound, for $x\geq0$. Let
$$G(x) = \cosh(x)-1 - \frac{x\sinh(x)}{2(1+x)}.$$
Then,
$$G'(x) = \sinh(x) - \frac{\sinh(x) + x(x+1)\cosh(x)}{2(1+x)^2}.$$
Thus, $G'\geq 0$ if and only if 
\begin{align*}
    \sinh(x)\left[2+4x+2x^2-1\right] \geq x(x+1)\cosh(x).
\end{align*}
Using that $\cosh(x) = e^{-x}+\sinh(x) \leq 1+ \sinh(x) x$, it is enough to show that
\begin{align*}
    &\sinh(x)\left[1+4x+2x^2\right] \geq x(x+1)(1+\sinh(x)) \\
    \Longleftrightarrow &\sinh(x)\left[1+3x+x^2\right] \geq x(x+1)
\end{align*}
which finishes the proof, since 
$$\sinh(x) \geq x.$$

\section{Appendix: Fourier Transform}
Instead of computing the Fourier transform of $K$, we equivalently compute the Fourier transform of 
$$\frac{1}{\sqrt{2}+\cosh(x)}.$$ 
Set
\[
I(v)=\int_{-\infty}^{\infty}\frac{e^{i x v}}{\sqrt{2}\cosh x+1}\,dx.
\]
$I(v)=I(-v)$ so we assume that $v>0$.
Define
\[
f(z)=\frac{e^{ivz}}{\sqrt{2}\cosh z+1}.
\]
We integrate $f$ on a contour which is a rectangle with edges
$$[-R,R] \quad \text{and} \quad [-R+2i\pi,R+2i\pi]$$
and take $R\to\infty$. Notice that $\cosh(z+2\pi i)=\cosh z$ so that 
$$f(x+2\pi i)=\frac{e^{iv(x+2i\pi)}}{\sqrt{2}\cosh x+1} = e^{-2v}f(x)$$
Thus integrating on the contour mentioned before and taking the limit as $R\to\infty,$ we find
\[(1-e^{-2\pi v})I(v) = 2i\pi\sum_{z} Res(f,z) .
\]
where the sum is taken over all poles $z$ inside the strip
\[
0\leq Im(z)\leq 2\pi.
\]
The poles are for 
\[\cosh z = -\frac{1}{\sqrt{2}}.
\]
That is 
\[
\cosh a \cos b + i\,\sinh a\sin b = -\frac{1}{\sqrt{2}}.
\]
This forces $a=0$, so that we need to solve for
\[
 \cos b = \frac{-1}{\sqrt{2}}.
\]
when $0\leq b \leq 2\pi$. Thus there are two poles :
\[
z_1=i\frac{3\pi}{4} \quad \text{and} \quad z_2=i\frac{5\pi}{4}.
\]
The residue at pole $z_k$, $k=1,2$ is given by 
\[
Res(f,z_k)=\frac{e^{ivz_k}}{g'(z_k)},
\]
where
\[
g(z)=\sqrt{2}\cosh z+1.\]
Thus
\[g'(z_k)=\sqrt{2}\sinh z_k = \sqrt{2}\sin(b_k).
\]
where $z_k = ib_k$. We find :
\begin{itemize}
    \item At $z_1=i\frac{3\pi}{4}$

    \[
\sinh(i\frac{3\pi}{4})= i\sin(\frac{3\pi}{4})
= i\frac{\sqrt{2}}{2}.
\]
Also,
\[
e^{i v z_1}= e^{i v\,(i\,\frac{3\pi}{4})} = e^{-3\pi v/4}.
\]
Therefore, the residue is
\[
Res(f,z_1)=\frac{e^{-3\pi v/4}}{\sqrt{2}\sin(z_1)} = \frac{e^{-3\pi v/4}}{i}.
\]
\item At $z_2=i\frac{5\pi}{4}$

Similarly we find
\[
Res(f,z_2)=-\frac{e^{-5\pi v/4}}{i}.
\]
\end{itemize}
By the residue theorem,
\begin{align*}
    (1-e^{-2\pi v})I(v) = 2\pi(e^{-3\pi v/4}-e^{5\pi v/4}).
\end{align*}
Thus
\begin{align*}
    I(v) &= \frac{2\pi(e^{-3\pi v/4}-e^{5\pi v/4})}{1-e^{-2\pi v}}.\\
\end{align*}
Now notice that 
\[e^{-3\pi v/4}- e^{-5\pi v/4} = e^{-\pi v}\, (e^{\pi v/4}- e^{-\pi v/4}) = 2\, e^{-\pi v}\,\sinh(\frac{\pi v}{4}),\]
and 
\[1-e^{-2\pi v} = 2e^{-\pi v}\sinh(\pi v).\]
We conclude that
\[I(v) = \frac{2\pi\sinh(\pi v/4)}{\sinh(\pi v)}.\]
The claim that 
$$\mathcal{F}\left(\frac{2\sinh(\pi x/8)}{\sinh(\pi x/2)}\right) = \frac{4}{1+\sqrt{2}\cosh(2v)}$$
is equivalent to
$$\mathcal{F}\left(\frac{4}{1+\sqrt{2}\cosh(2v)}\right) = \frac{4\pi\sinh(\pi x/8)}{\sinh(\pi x/2)}.$$
By what precedes,
$$\mathcal{F}\left(\frac{4}{1+\sqrt{2}\cosh(2v)}\right) = 4\frac{I(v/2)}{2} = 2I(v/2) = \frac{4\pi \sinh(\pi v/8)}{\sinh (\pi v /2)}$$
which is what we wanted.

\bibliographystyle{alpha}
\bibliography{biblio}

\end{document}